\documentclass{amsart}
\usepackage{graphicx} % Required for inserting images

\title{On Nearly Frobenius structures}
\author{William Davis and Olivia Dumitrescu}
\address{University of North Carolina at Chapel Hill, 340 Phillips Hall, CB 3250 NC 27599-3250 email:dolivia@unc.edu, willdavismath@gmail.com}

\date{October 2025}

\newcommand{\bea}{\begin{eqnarray}}
\newcommand{\eea}{\end{eqnarray}}
\newcommand{\be}{\begin{equation}}
\newcommand{\ee}{\end{equation}}
\newcommand{\tensor}{\otimes}
\newcommand{\Hom}{\mathrm{Hom}}
\newcommand{\Atn}{A^{\tensor n}}
\newcommand{\Atm}{A^{\tensor m}}

\newcommand{\inv}{^{-1}}
\newcommand{\eps}{\varepsilon}

\newcommand{\id}{\mathrm{id}}
\newcommand{\eul}{\mathbf{e}}
\newcommand{\bige}{\mathbf{E}}
\newcommand{\real}{\mathbb{R}}
\newcommand{\cpx}{\mathbb{C}}

\newcommand{\cob}{$\mathbf{2Cob}$\,}

%% Math Packages %%%%%%%%%%%%%%%%%%%%%%%%%%%%%%%%%%%%%%%%%%%%
\usepackage{amsmath}
\usepackage{amsthm}
\usepackage{amsfonts}
\usepackage{bbm}
\usepackage{amssymb}
\usepackage{geometry}
\usepackage{tikz}
\usepackage{tikz-cd}
\usepackage{txfonts}
\usepackage[inline, shortlabels]{enumitem}
\usepackage{blkarray}
%\usetikzlibrary{arrows,patterns,decorations.pathmorphing}
\usetikzlibrary{arrows,patterns,decorations}
%\usetikzlibrary{patterns,snakes}
\usepackage{titleref}
\usepackage[normalem]{ulem} 
\usepackage{float}
\usepackage{graphicx}% Include figure files
\usepackage{dcolumn}% Align table columns on decimal point
\usepackage{mathptmx}
\usepackage{etoolbox}
\usepackage{xcolor}%subcaption}
\usepackage[caption=false]{subfig}
\usepackage{tabularx}
\usepackage{hyperref}

\newtheorem{Lem}{Lemma}[section]
\newtheorem{Thm}[Lem]{Theorem}
\newtheorem{Cor}[Lem]{Corollary}

\newtheorem{Prop}[Lem]{Proposition}

\theoremstyle{definition}
\newtheorem{Ex}[Lem]{Example}
\newtheorem{Def}[Lem]{Definition}
\newtheorem{Rmk}[Lem]{Remark}

\usepackage[ maxbibnames=10, backend=biber]{biblatex}
\addbibresource{BIB.bib}
\defbibheading{References}{References}

\begin{document}

\begin{abstract}
Nearly Frobenius structures and 2-dimensional Almost TQFTs were introduced and shown to be in categorical equivalence in \cite{GLSU} in the attempt to extend the Atiyah-Segal's definition to the category of infinite dimensional vector spaces.
In this paper, we investigate nearly Frobenius structures and we give a classification result for Almost TQFTs in dimension 2. In particular, the TQFTs functorial axioms become equivalent to the Edge Contraction/Construction axioms of colored ribbon graphs recently introduced by the first author in \cite{thesis}. 
\end{abstract}

\subjclass[2020]{Primary: 81T40; Secondary:  14N35, 16S34, 53D45}
\keywords{Almost TQFT, Frobenius structures, nearly Frobenius algebras, 2d TQFT}

\maketitle

\tableofcontents

%\maketitle

\section{Introduction}

Frobenius algebras and two-dimensional topological quantum field theories (2D TQFT) have long been of interest across a spectrum of fields in mathematical physics. First introduced by  Atiyah \cite{Atiyah} and largely classified by \cite{MS}  Moore and Segal, 2D TQFTs serve as a bridge, weaving connections from string theory to enumerative geometry, from topological recursion to categorification. Two dimensional TQFTs are in one to one correspondence with Frobenius algebras as noted by Dijkgraaf in his PHD thesis \cite{Dijkgraaf}. Important to the structure of commutative Frobenius algebras is that they are by necessity finite-dimensional (\cite{Kock}). Notable examples of such algebras include group algebras of finite groups or cohomology rings of compact manifolds. While  the structure of  similar infinite-dimensional  analogs is less understood,  a recent work of Gonzalez and al. \cite{GLSU}  introduces  infinite-dimensional {\it nearly Frobenius algebras} and their associated {\it Almost TQFT} proving they are in categorical equivalence. While the authors of \cite{GLSU}  develop their theory from a largely algebraic perspective, we reframe these ideas into the setting of {\it enumerative geometry} and graph theory compatible with the topological recursion formalism.

 The quantization can be regarded as a reconstruction process from the two-dimensional topological quantum field theories  of Atiyah-Segal to cohomological field theories  of Kontsevich-Manin \cite{KM}.
 The 2D TQFT classification for Frobenius algebras,  was indirectly used in the celebrated classification of Givental-Teleman for Cohomological Field Theories for semisimple Frobenius algebras \cite{Teleman_2011}.  
 In \cite{DM3} (in progress) the authors reformulated an enumerative version of the Teleman's classification theorem for Cohomological Field Theories \cite{Teleman_2011} in the enumerative geometry formalism via the theory of ribbon graphs using the works of Thurston et al., Strebel, Mulase, and Penkava \cite{STT}, \cite{Strebel}, \cite{MP}. A recent work of Kontsevich et al. \cite{KTV} also uses ribbon graphs to describe moduli spaces related to fully-extended oriented 2D TQFT for non-compact 2-cobordisms given by a construction of Lurie \cite{Lurie}.

\noindent In this paper, in Sections \ref{Section2} and \ref{Frob} we introduce Frobenius algebras and TQFT, from Atiyah's categorical approach. We follow works of Segal \cite{Segal}, Dijkgraaf \cite{Dijkgraaf}\, Moore \cite{MS}, Teleman \cite{Teleman_2011}, and Kock \cite{Kock} exploring the connections bridging topological invariants to algebraic structures. Frobenius algebras were also studied by Nakayama in the early 20th century from a different perspective \cite{Nakayama1}, \cite{Nakayama2}.

   \begin{Thm}\label{thm B}
        Suppose $A$ is a Frobenius algebra with counit $\eps$, coproduct $\delta$, and Euler element $\eul$. Then, the value of a 2D TQFT is given by 
        \be\label{eqn:tqftclassificationintro}
        \omega_{g,n,m}(v_1,\ldots,v_n)=\begin{cases}
            \eps(v_1\cdots v_n\eul^g),& m=0,\\
            v_1\cdots v_n\eul^g,& m=1,\\
            \delta^{m-1}(v_1\cdots v_n\eul^g),& m\geq 2.
        \end{cases}
        \ee
    \end{Thm}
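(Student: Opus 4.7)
The plan is to prove the formula by invoking the functoriality of a 2D TQFT on a canonical pair-of-pants decomposition of $\Sigma_{g,n,m}$, the connected oriented surface of genus $g$ with $n$ incoming and $m$ outgoing boundary components. Since a 2D TQFT is a symmetric monoidal functor from \cob to vector spaces, and every morphism of \cob is generated by the unit, counit, multiplication $\mu$, comultiplication $\delta$, identity, and twist, the value $\omega_{g,n,m}$ is determined once one fixes any decomposition of $\Sigma_{g,n,m}$ into these generators.

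First, I would fix a canonical "left-to-right" decomposition: merge the $n$ incoming boundaries via $n-1$ successive applications of $\mu$, producing the product $v_1\cdots v_n$; then attach $g$ consecutive handle cobordisms (each a genus-one bordism with one input and one output). By the standard identification $\eul = \mu\circ\delta(1)$, each handle acts as multiplication by the Euler element, giving the intermediate element $v_1\cdots v_n\,\eul^g \in A$. Finally, treat the outgoing side according to $m$: for $m=1$ there is nothing more to do; for $m=0$ cap off with $\eps$, giving $\eps(v_1\cdots v_n\eul^g)$; for $m\geq 2$ apply $\delta$ exactly $m-1$ times, giving $\delta^{m-1}(v_1\cdots v_n\eul^g)$. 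This directly yields the three cases of \eqref{eqn:tqftclassificationintro}.

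Second, I would verify that the answer does not depend on the decomposition chosen. This is the Moore--Segal normal-form statement: any two pair-of-pants decompositions of the same 2-cobordism differ by a sequence of elementary topological moves (associativity of $\mu$, coassociativity of $\delta$, unit/counit laws, commutativity/cocommutativity via the twist, and the Frobenius compatibility $(\mu\otimes\id)\circ(\id\otimes\delta)=\delta\circ\mu=(\id\otimes\mu)\circ(\delta\otimes\id)$), each of which is an identity in the Frobenius algebra $A$. In particular, the element $\eul^g$ is well defined because all handle cobordisms of the same genus yield the same endomorphism of $A$.

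The main obstacle is precisely this independence step: in a \emph{nearly} Frobenius setting (which is the main interest of the paper) the Frobenius compatibility fails, so the canonical decomposition above is no longer canonical, and distinct gluings can produce distinct elements of $A$. Here, however, the hypothesis is that $A$ is a genuine Frobenius algebra, so the classical coherence theorem applies and the three-case formula is forced. I would therefore present the proof as a direct computation on the canonical decomposition, with a pointer to the Moore--Segal coherence result (already implicit in the cited works \cite{DM2,DM1,Kock,MS}) to justify well-definedness.
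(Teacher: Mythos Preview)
Your argument is correct, but it is not the route the paper takes. The paper does not argue via a normal-form decomposition of $\Sigma_{g,n,m}$; instead it takes the $m=0$ case $\omega_{g,n+m,0}(v_1,\ldots,v_n,w_1,\ldots,w_m)=\eps(v_1\cdots v_n\eul^g\,w_1\cdots w_m)$ (already established in the cited references) as input, views it as an element of $(A^*)^{\tensor m}$ in the $w$-variables, and then \emph{reconstructs} $\omega_{g,n,m}$ by post-composing with $(\lambda^{-1})^{\tensor m}$, where $\lambda:A\to A^*$, $\lambda(v)=\eta(v,-)$, is the Frobenius isomorphism. The bulk of the paper's proof is a basis computation showing that $\lambda^{\tensor m}\circ\delta^{m-1}(v)=\eps(v\,w_1\cdots w_m)$, which identifies the pulled-back map with $\delta^{m-1}(v_1\cdots v_n\eul^g)$. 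Your normal-form argument is shorter and more transparent topologically; the paper's duality argument is more algebraic and explains concretely why finite-dimensionality (invertibility of $\lambda$) is the mechanism that lets one recover everything from the $m=0$ data. Interestingly, the paper does invoke exactly your normal-form picture later (Figure~\ref{fig:normalformcobordism}) when treating the nearly Frobenius case, where $\lambda$ is no longer invertible and the duality trick breaks down.

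One correction to your closing remark: in a nearly Frobenius algebra the Frobenius compatibility $(\mu\tensor\id)(\id\tensor\delta)=\delta\circ\mu=(\id\tensor\mu)(\delta\tensor\id)$ does \emph{not} fail---it is part of the definition. What is lost is unitality, so the Euler element $\eul=m\circ\delta(1)$ no longer exists and one must work with the Euler \emph{map} $\bige=m\circ\delta$ instead; the normal-form decomposition remains canonical in \cob$^+$.
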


\noindent The case $m=0$ was proved in Theorem 3.8 of \cite{DM2}, see also \cite{DM1}; therefore this statement for $m\geq 1$ can be regarded as a generalization of results of \cite{DM1}.

 We recall that a {\it ribbon graph} is a graph labeling faces embedded on a punctured Riemann surface \cite{Grothendieck}, \cite{Hooft}, while in this work we use {\it cell graph} to denote the Poincar\'e dual of a ribbon graph, namely a ribbon graph itself labeling vertices. For a Frobenius algebra,  the authors define in \cite{DM1, DM2} a {\it cell graph TQFT} via a set of axioms on the category of cell graphs on a fixed genus and a fixed number of marked points. This set of axioms induced by pairs of pants decomposition on punctured Riemann surfaces reflects the given Frobenius algebra structure. The authors further prove that a cell graph TQFT is independent on the cell graph, depending only on its topological information, namely on the associated punctured Riemann surface described by the cell graph. Furthermore, for a given Frobenius algebra, a classification result for cell graphs 2D TQFT and Theorem \ref{thm B} imply that the axioms of Atiyah-Segal and the ones of cell graphs TQFTs are equivalent. This observation was exploited by the authors of \cite{DM1} in enumerative geometry problems. For example, the axioms of cell graph TQFTs are also determining a recursion of generalized Catalan numbers via a count of cell graphs. Even if the generalized Catalan recursion uses only topological information, the generating functions of the generalized Catalan numbers, contains information of Gromov-Witten invariants of a point \cite{Kontsevich_intersection},\cite{W1991}; these results proved in \cite{DMSS}, \cite{DM1}, \cite{DM2} and will be also presented in \cite{DaDu}. The authors of \cite{DM1, DM2} have used and exploited  different generalization of their Catalan recursion in the framework of enumerative geometry, namely Hitchin theory, Hurwitz numbers, or the Cohomological Field Theories.  In this work we will restrict to describing nearly Frobenius structures.

 \noindent In Sections \ref{Section3} and \ref{Section4}, the authors investigate the concepts of Nearly Frobenius algebras and Almost TQFTs, i.e. infinite dimensional vector spaces that can be endowed with a Frobenius like structure. Throughout this work, for a counital Frobenius algebra $A$, we will use $\eps$ to denote the counit, $\delta$ the coproduct, $m$ the product, $\eta=\eps\circ m$ the non-degenerate bilinear form, $\bige=m\circ\delta$ the Euler map \eqref{eqn:EULbasis}, and $\eul=m\circ\delta(1)$ the Euler element, if it exists. The difficulty in the infinite dimensional case is that $A$ is not necessarily unital,  so the Euler element of $A$ may not exist. Moreover, the duality between $A$ and its dual $A^*$ is no longer an isomorphism, so the TQFT can not be recovered from a map with output in the ground field $K$. In the finite unital case, for Frobenius algebras, the classification result was achieved by gluing cobordisms to the input side of the TQFT i.e. by utilizing the coproduct of the unit $\delta(1)$ \cite{DM1, DM2}. In the nearly Frobenius algebra the coproduct of the unit is no longer available, so the classification result requires a new approach. Nevertheless, if $A$ is not unital, one can instead glue certain cobordisms to the output side of a TQFT, as long as $A$ is counital (i.e. the map $\eps\in A^*$ exists). These techniques look similar to the techniques used in the unital case where now the bilinear form $\eta=\eps\circ m$ plays an analogous role to $\delta(1)$ in the previous proof.

\begin{Thm}\label{thm C} 
    Let $A$ be a counital nearly Frobenius algebra with counit $\eps$, coproduct $\delta$, and Euler map $\bige$. The value of the Almost TQFT associated to $A$ is then given by
        \be
     \omega_{g,n,m}(v_1,\ldots,v_n)=\begin{cases}
            \eps(\bige^g(v_1\cdots v_n)),& m=0,\\
            \bige^g(v_1\cdots v_n),& m=1,\\
            \delta^{m-1}(\bige^g(v_1\cdots v_n)),& m\geq 2.
        \end{cases}
    \ee
\end{Thm}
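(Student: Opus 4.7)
\medskip

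\noindent\textbf{Proof plan.} The plan is to use the pair-of-pants decomposition of connected oriented 2-cobordisms together with functoriality of the Almost TQFT, translating a topological decomposition into an algebraic formula in $A$. The elementary cobordisms are the pair of pants (realizing the product $m$), the reversed pair of pants (realizing the coproduct $\delta$), the cylinder (realizing $\id$), the counit disk (realizing $\eps$), and the handle (a torus with two disks removed). First I would verify that the handle operator equals $\bige = m \circ \delta$: cutting the handle along a meridian circle decomposes it into a reversed pair of pants followed by a pair of pants, so functoriality produces $m \circ \delta$. The unit disk is unavailable because $A$ is not assumed unital, which forces all capping to happen on the output side.

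Next, for $n \geq 1$ and $m \geq 1$ I would exhibit the standard decomposition of $\Sigma_{g,n,m}$: merge the $n$ inputs using $n-1$ pairs of pants, stack $g$ handles along the resulting tube, and then split into $m$ outputs using $m-1$ reversed pairs of pants. Associativity and commutativity of $m$ make $v_1 \cdots v_n$ unambiguous, coassociativity and cocommutativity of $\delta$ make $\delta^{m-1}$ unambiguous, and the $g$ stacked handles contribute $\bige^g$. This directly yields $\omega_{g,n,m}(v_1,\ldots,v_n) = \delta^{m-1}\bigl(\bige^g(v_1 \cdots v_n)\bigr)$. The cases $m = 1$ and $m = 0$ follow by replacing $\delta^{m-1}$ with $\id$ and with $\eps$ respectively; the case $m = 0$ requires $n \geq 1$ so that no input disk cap is needed, and specializes to Theorem~3.8 of \cite{DM2} in the finite-dimensional setting.

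The main obstacle is to establish well-definedness: a priori the right-hand side could depend on where the handles are attached and on the order in which the pair-of-pants moves are performed. Independence of the handle position is the delicate point in the non-unital setting, since the usual unital argument via $\delta(1)$ is not available. I would handle it by induction on $g$, using the Frobenius identity $\delta \circ m = (m \otimes \id) \circ (\id \otimes \delta) = (\id \otimes m) \circ (\delta \otimes \id)$, which holds in any nearly Frobenius algebra, to slide each additional handle past a neighboring product or coproduct, reducing the inductive step to a local topological move on a single tube segment. Independence of the order of merging and splitting is then immediate from (co)associativity and (co)commutativity. Once these coherence checks are in place, the formula is a topological invariant of $\Sigma_{g,n,m}$ and agrees with the value of the Almost TQFT by functoriality.
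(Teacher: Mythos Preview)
Your approach is correct but takes a different route from the paper. You argue directly from the normal form of a connected cobordism in $\mathbf{2Cob}^+$: merge inputs by $n-1$ pairs of pants, insert $g$ inline handles $\Sigma_{1,1,1}$ (each contributing $\bige=m\circ\delta$), and split by $m-1$ reversed pairs of pants, capping with $\eps$ when $m=0$. The paper instead never uses the inline handle operator. Its inductive mechanism is the \emph{output-side} sewing of $\Sigma_{0,2,0}$ (the bilinear form $\eta=\eps\circ m$): first a chain of lemmas establishes the genus-$0$ values $\omega_{0,n,1}$, $\omega_{0,n,0}$, $\omega_{0,n,m}$ by explicit basis computations and by inverting $\lambda(v)=\eta(v,-)$ on its image, and then genus is increased via $\omega_{g,n,m}=(\eta\otimes\id^{\otimes m})\circ\omega_{g-1,n,m+2}$, i.e.\ by gluing $\eta$ to two output legs rather than by attaching a handle to a single strand.

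What each buys: your argument is shorter and more conceptual, and it makes transparent that counitality is only needed for the $m=0$ case (the formula for $m\geq 1$ already lives in $\mathbf{2Cob}^+$ without caps). The paper's route, by contrast, foregrounds exactly how $\eta$ on the output side substitutes for the unavailable $\delta(1)$ on the input side, which is the point the authors want to stress in the infinite-dimensional setting. One remark on your ``main obstacle'': once you invoke that $Z$ is a symmetric monoidal functor on $\mathbf{2Cob}^+$ (as supplied by \cite{GLSU}), independence of the handle position and of the order of merges/splits is automatic---these are relations in the cobordism category, not something to be re-verified in $A$. Your proposed check via the Frobenius identity is correct, but it is re-proving a relation that functoriality already guarantees; you can simply cite the standard presentation of $\mathbf{2Cob}$ and the normal form (which the paper itself displays in its Figure~\ref{fig:normalformcobordism} as a closing remark).
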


We further recall, that starting from a nearly Frobenius algebras,   the authors define in \cite{thesis}, see also \cite{DaDu}, an equivalent combinatorial set of axioms based on the formalism of ribbon graphs, also known as Grothendieck's dessins d'enfants \cite{Grothendieck}, and classify all such possible assignments (see Theorem \eqref{thm A}). The classification result of \cite{thesis}, \cite{DaDu} restricted to Frobenius algebra, ie to the finite dimensional case, recovers the 2D TQFT via ribbon graphs as exposed in \cite{DM1},  \cite{DM2} or \cite{D}.

\noindent Since the systems of linear maps formulated from the cobordism perspective and the ribbon graph perspective give the same values, Theorems \ref{thm A} and \ref{thm C}  imply that for nearly Frobenius structures, the standard set of axioms of 2D TQFT are equivalent to the ones defined on the category of ribbon graphs see Theorem 1.1, Chapter 4 of \cite{thesis} and Theorem 1.2, Chapter 5 of \cite{thesis} and also \cite{DaDu}. We obtain the following result:

\begin{Cor}\label{thm D}
    The functorial axioms for {\it Almost TQFT} of \cite{GLSU} are equivalent to the {\it colored ribbon graph Edge Contraction/Construction} axioms.
\end{Cor}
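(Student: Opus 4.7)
The plan is to leverage the two classification theorems in tandem. Theorem \ref{thm C} shows that every Almost TQFT $\omega$ is uniquely determined by the underlying nearly Frobenius data $(\eps,\delta,\bige)$ of its state space $A$, and gives an explicit closed formula for $\omega_{g,n,m}$. Theorem \ref{thm A} does the same on the ribbon graph side: every assignment satisfying the Edge Contraction/Construction axioms is uniquely determined by the same data and takes the same explicit form. Since the two formulas coincide, the linear maps produced on either side of the correspondence will agree, and the equivalence of axiom systems will follow from this matching together with the fact that each side classifies the same intrinsic algebraic structure.

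First, starting from a counital nearly Frobenius algebra $A$, I would fix notation for both sides in parallel. On the cobordism side, Theorem \ref{thm C} yields a family $\omega^{\mathrm{cob}}_{g,n,m}:A^{\tensor n}\to A^{\tensor m}$ with the piecewise description in terms of $\eps$, $\delta$, and $\bige$. On the ribbon graph side, Theorem \ref{thm A} yields $\omega^{\mathrm{rib}}_{g,n,m}$ with an identical piecewise description. A direct comparison of the two formulas then gives $\omega^{\mathrm{cob}}_{g,n,m}=\omega^{\mathrm{rib}}_{g,n,m}$ for every triple $(g,n,m)$.

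Next, to promote this pointwise equality into an equivalence of the two axiom systems, I would argue in both directions. In one direction, any functor satisfying the Almost TQFT axioms determines a nearly Frobenius algebra on its state space and, by Theorem \ref{thm C}, is completely determined by that data; pushing the same data through Theorem \ref{thm A} produces a ribbon graph assignment that automatically satisfies Edge Contraction/Construction and assigns identical morphisms. In the converse direction, given a ribbon graph system, Theorem \ref{thm A} recovers the nearly Frobenius data and Theorem \ref{thm C} reconstructs a compatible Almost TQFT functor. Since both reconstructions produce the same linear maps and both are forced by the same underlying algebraic structure, the two sets of axioms are logically equivalent.

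The main technical point, and the step where I expect the real work to sit, is formalizing the bridge between the two categorical languages: one must verify that the generating morphisms on each side (basic cobordisms such as pair of pants, cap, cup, and cylinder with handle on the Atiyah--Segal side, and the elementary edge contraction and edge construction moves on the combinatorial side) are mapped to compatible linear operations under the dictionary, so that matching values on generators suffices to match the full functor. Once this dictionary is fixed and the two theorems are stated in parallel, the remainder of the argument reduces to a direct comparison of the explicit formulas produced by Theorem \ref{thm A} and Theorem \ref{thm C}.
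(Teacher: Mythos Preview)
Your proposal is correct and follows essentially the same approach as the paper: both arguments deduce the equivalence of axiom systems from the fact that Theorem~\ref{thm A} and Theorem~\ref{thm C} yield identical closed formulas for the associated linear maps. The paper's proof is in fact terser than yours, consisting of a single sentence noting that the two classifications coincide; your additional remarks about arguing in both directions and checking the dictionary on generators are reasonable elaborations but not strictly needed beyond what the two classification theorems already provide.
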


The combinatorial information of the ribbon graph formulation of Almost TQFT  is closely related to mathematical physics. Therefore in this work, we will omit the technical details of the ribbon graph formulation and rather emphasize the algebraic aspects of the theory.
We will briefly outline interesting research direction to relate this work with enumerative geometry.
\noindent The ribbon graphs theory provides a recursion of generalized Catalan numbers similar to the Cut and Join equation for Hurwitz numbers. The two recursions are important in mathematical physics as they compute Gromov-Witten invariants of a point \cite{Kontsevich_intersection}, \cite{W1991}, \cite{DMSS} via the topological recursion formalism, or via the ELSV formula (see alo \cite{Norbury} for a related approach). As a consequence of Corollary \ref{thm D} it is possible to further determine a recursion for colored Catalan numbers as a count of colored cell graphs with respect to edge contraction axioms, see \cite{thesis, DaDu}. 
%ie the number of ribbon graphs with 2 colored vertices on a Riemann surface of genus $g$. 

For Frobenius algebras, a recursion of twisted TQFT's via Catalan numbers 
is obtained by using Theorems \ref{thm A} and \ref{thm C} see \cite{DM1} together with the generalized Catalan recursion of \cite{DMSS}. This recursion of twisted TQFT's via Catalan  numbers can further be related to topological recursion, on the same spectral curve as the one determined by Catalan numbers, and has been shown in \cite{Serrano} to be related to enumerative geometry.
 
 For nearly Frobenius structures, Theorems \ref{thm A} and \ref{thm C} can be used to further determine  a recursion relation  in $A^{\tensor m}$ of Almost TQFTs weighted by the number of colored arrow cell graphs with respect to the corresponding contraction axioms of cell graphs.  It is an interesting question in the area of Topological Recursion to relate these recursions to enumerative geometry and Gromow-Witten theory establishing a rigorous mathematical foundation between enumerative geometry of ribbon graphs and invariants on moduli spaces of curves.
 
 It is also an interesting question to further extend the classification of Teleman of Cohomological field theories  \cite{Teleman_2011} together with its combinatorial interpretation via the theory of ribbon graphs as in \cite{DM3} for {\it nearly Frobenius structures}, by extending the classification Theorem \ref{thm C} from Almost TQFT to Cohomological Field Theories.

\subsection*{Acknowledgements} The authors would like to express their graditude to the Max Planck Institute for Mathematics, Bonn for their generosity, hospitality and stimulating environment during their long stay in 2023, when this project was initiated. They would also like to express their gratitude to the I.H.E.S and R.I.M.S for the visit in 2024. During their stay in MPIM, the second author gave a mini-course in University of Oxford, where some preliminaries ideas of this project were presented. 

The research of the authors was partially supported by the NSF-FRG DMS 2152130 grant and the UNC JFDA Award 2022.
\section{Preliminaries}\label{Section2}
%\subsection{Background}\label{background}
%\subsection{Frobenius Algebras}\label{secFA}
\noindent Throughout this chapter, we will consider finite-dimensional, unital, commutative Frobenius algebras over a field $K$.
\begin{Def}
    (Frobenius algebra) A \textbf{Frobenius algebra} is a pair $(A,\eta)$ where $A$ is a finite-dimensional, unital algebra over a field $K$ with a standard associative multiplication map $m:A\tensor A\to A$. The map $\eta:A\tensor A\to K$ is a non-degenerate symmetric bilinear form satisfying 
    \be
        \eta(u,vw)=\eta(uv,w),\quad \text{for\,all\,} u,v,w\in A.
    \ee 
    In this case, $\eta$ is known as the \textbf{Frobenius form}.
\end{Def}
\noindent Many computations with Frobenius algebras become more convenient when phrased in terms of a basis. If $\{e_1,e_2,\ldots, e_r\}$ is a $K$-basis for $A$, then we can write $\eta$ as a symmetric invertible matrix in terms of this basis.
\be\label{formasbasis}
\eta_{ij}:=\eta(e_i,e_j),\quad \eta=[\eta_{ij}],\quad \eta\inv=[\eta^{ij}]
\ee
%Say something about symmetric bilin form?
Given the Frobenius form $\eta$, we can define a counit $\eps: A\to K$. If $1\in A$ is the multiplicative identity of $A$, then for all $v\in A$, we can define the counit by $\eps(v)=\eta(1,v)$.
\begin{Rmk}
    Just as we can define the counit given a Frobenius form. We can conversely define the bilinear form based on the counit. Given a counit $\eps:A\to K$, we can define the form $\eta$ to be $$\eta(u,v)=\eps(uv),\,\text{for\,all\,} u,v\in A.$$ Because of this, some texts (particularly the one from Kock \cite{Kock}) refer to the counit as the Frobenius form and the bilinear form $\eta$ as the \textit{Frobenius pairing}.
\end{Rmk}
\noindent We further consider the map 
\be\label{eqn:lambdaiso}
\lambda:A\longrightarrow A^*,\quad \lambda(u)=\eta(u,-)
\ee
which, by the non-degeneracy of $\eta$, is an isomorphism from $A$ to its dual in the case that $A$ is finite dimensional. If we consider the dual of the multiplication map $m^{*}:A^{*}\longrightarrow A^{*}\tensor A^{*}$, then this isomorphism induces a unique comultiplication map $\delta:A\longrightarrow A\tensor A$ such that the following diagram commutes.
\be
	\begin{tikzcd}
		A \arrow[dd, "\lambda"'] \arrow[rr, "\delta", dashed] &  & A\otimes A \arrow[dd, "\lambda\otimes\lambda"] \\
		& \circlearrowleft &                                                \\
		A^* \arrow[rr, "m^*"']                                &  & A^*\otimes A^*                                
	\end{tikzcd}
\ee
We can explicitly compute this comultiplication given our basis of $A$: \be\label{eqn:deltabasis}
\delta(v):=\sum_{i,j,a,b=1}^{r}\eta(v,e_i e_j)\eta^{ia}\eta^{jb}e_a\otimes e_b\ee

%Character Theory Example
\begin{Ex}\label{ex:character}
     For a finite group $G$, we can consider the ring $R(G)$ of class functions $G\longrightarrow\cpx$, that is, functions which are constant on each conjugacy class of $G$. This ring is equipped with a bilinear form $\eta$ given by \be\eta(\varphi,\psi)=\dfrac{1}{|G|}\sum_{g\in G}\varphi(g)\psi(g\inv).\ee
    Then, the irreducible characters of $G$ for an orthonormal basis with respect to $\eta$ and therefore $\eta$ is a nondegenerate form that endows $R(G)$ with a commutative Frobenius algebra structure.\\
    \\
    Take for example $G=S_3$, the symmetric group on three elements. This group has three conjugacy classes, indexed by the cycle type of the permutation in the group: one conjugacy class containing only the identity permutation, one conjugacy class containing two $3$-cycles, and one conjugacy class containing three transpositions. The character table of this group is shown below.

    \begin{table}[h]
        \centering
        \begin{tabular}{|c|c|c|c|}
           \hline Conjugacy Class & Size $1$ & Size $2$ & Size $3$\\
           Representative & $\id$ & $(1\,\,2\,\,3)$ & $(1\,\,2)$\\\hline
            $\chi_1$ & $1$ & $1$ & $1$\\\hline
            $\chi_2$ & $1$ & $1$ & $-1$\\\hline
            $\chi_3$ & $2$ & $-1$ & $0$\\\hline
        \end{tabular}
        \vskip 10pt
        \caption{Character table of the symmetric group on $3$ elements}
        \label{tab:s3character}
    \end{table}

\noindent Here, $\chi_1$ is the character of the trivial representation, $\chi_2$ is the character of the dimension-$1$ alternating representation, and $\chi_3$ is the character of the dimension-$2$ irreducible representation. Considering the multiplication of two class functions that $(\chi\cdot\xi)(g)=\chi(g)\xi(g)$, we can see the multiplication on the basis is given by \begin{align*}
    \chi_1\chi_i=\chi_i,\quad\chi_2^2=\chi_1,\quad\chi_3^2=\chi_1+\chi_2+\chi_3,\quad\chi_2\chi_3=\chi_3.
\end{align*}
From this, we can compute the coproduct on basis elements. Since these characters form an orthonormal basis, then $\eta_{ij}=\eta^{ij}=1$ if $i=j$ and $0$ otherwise. Then, we see that \be
\delta(\xi)=\sum_{i,j=1}^3\eta^{ij}\xi\chi_i\tensor \chi_j=\sum_{i=1}^3 \xi \chi_i\tensor\chi_i,
\ee for any character $\xi$. For the irreducible characters, this gives that 
\begin{align*}
    \delta(\chi_1)&=\chi_1\tensor\chi_1+\chi_2\tensor\chi_2+\chi_3\tensor\chi_3,\\
\delta(\chi_2)&=\chi_2\tensor\chi_1+\chi_1\tensor\chi_2+\chi_3\tensor\chi_3,\\
\delta(\chi_3)&=\chi_3\tensor\chi_1+\chi_3\tensor\chi_2+(\chi_1+\chi_2+\chi_3)\tensor\chi_3.
\end{align*}
\end{Ex}
%Group Algebra Example
\begin{Ex}\label{ex:groupalg}
    For any finite group $G$, the group algebra $K[G]$ over $K$ can be given a Frobenius algebra structure. This algebra has basis $\displaystyle\bigsqcup_{g\in G}\{e_g\}$ with multiplication defined on the basis $e_ge_h=e_{gh}$. We then define the Frobenius form on this basis
    \be
        \eta(e_g,e_h)=\begin{cases}
            1 & \text{if\,} g=h\inv\\
            0 & \text{otherwise}
        \end{cases}
    \ee
    and extend linearly. Consider the order of this basis such that we list the basis element associated to the group identity, followed by all basis elements associated to order 2 elements of the group, followed by pairs of basis elements corresponding to inverse group elements. Then, we can write the matrix associated to $\eta$ as in \ref{formasbasis} as a block diagonal matrix featuring an identity block followed by several 2-by-2 blocks of with 0 on the diagonal and 1 off the diagonal. $$\eta=\left(\begin{array}{c|cc|c|cc}
       I_n & 0 & 0 & \cdots & 0 & 0\\\hline
         0 & 0 & 1 & \cdots & 0 & 0\\
         0 & 1 & 0 & \cdots & 0 & 0\\\hline
         \vdots & \vdots & \vdots & \ddots & \vdots & \vdots\\\hline
         0 & 0 & 0 & \cdots & 0 & 1\\
         0 & 0 & 0 & \cdots & 1 & 0\\
       
    \end{array}\right)$$
    Importantly, we notice that each nontrivial block along the diagonal of this matrix is its own inverse, so this matrix as a whole is its own inverse. In particular, $\eta_{gh}=\eta^{gh}$ for all $g,h\in G$.
\end{Ex}
\begin{Rmk}
    Note that if we instead define this form such that $\eta(e_g,e_h)=1$ for all basis elements $g,h\in G$, then this endows $K[G]$ with a Hopf algebra structure rather than a Frobenius algebra structure.
\end{Rmk}

%Cohomology example
\begin{Ex}\label{ex:cohom}
    Given an oriented compact differential manifold $M$, we can give the cohomology ring $H^*(M,\real)$ a Frobenius algebra structure. Here the multiplicative structure is just the cup product, and we define the counit to be the restriction to the top-dimensional cohomology. In particular, if $\dim_KM=n,$ then we define the counit to be the restriction $$\eps:H^*(M,\real)\longrightarrow H^n(M,\real).$$
\end{Ex}

\noindent Further, $A$ is both associative and coassociative, meaning that $(m(m(u\otimes v)\otimes w)=m(u\otimes m(v\otimes w))$ and $(\delta\otimes \id)(\delta(v))=(\id\otimes\delta)(\delta(v))$ respectively, and the maps $m$ and $\delta$ satisfy the ``Frobenius relation'' which is to say the following diagram commutes:
\be\label{cd:FrobRelDiag}
	\begin{tikzcd}
		& A\otimes A\otimes A \arrow[rd, "m\otimes 1"]  &            \\
		A\otimes A \arrow[ru, "1\otimes\delta"] \arrow[rd, "\delta\otimes1"'] \arrow[r, "m"] & A \arrow[r, "\delta"]                         & A\otimes A \\
		& A\otimes A\otimes A \arrow[ru, "1\otimes m"'] &           
	\end{tikzcd}
\ee
This is equivalent to saying that for all $u,v\in A$ that $\delta(uv)=(\id\tensor m)(\delta(u),v)=(m\tensor\id)(u,\delta(v)).$
In the case that we have defined the coproduct in terms of a basis for $A$,  the commutativity of this diagram follows from a general result for non-degenerate bilinear forms in which we can write any vector $v$ in terms of the basis: \be\label{eqn:canonbasis}
v=\sum_{a,b}\eta(v,e_a)\eta^{ab}e_b=\sum_{a,b}\eta(e_a,v)\eta^{ba}e_b.
\ee

As seen above, the composition $\delta\circ m:A\tensor A\longrightarrow A\tensor A$ plays an important role in the definition of a Frobenius algebra. This composition in the other order $m\circ \delta:A\longrightarrow A$ is important in the development of 2D TQFT.

\begin{Def}
(Euler element) The \textbf{Euler element} of a Frobenius algebra is a special element, which we denote $\eul$, defined by
\be
\eul = m\circ\delta(1).
\ee 
Where $1\in A$ is the multiplicative identity. In terms of a basis, we can see
\be\label{eqn:eulbasis}
\eul = \sum_{a,b}\eta^{ab}e_ae_b.
\ee
    This element allows us to extend our understanding of 2D TQFT to those linear maps associated to surfaces (or in our case, cell graphs) of positive genus.
\end{Def}
\begin{Ex}
    As we computed the coproduct of the ring of class functions in Example \ref{ex:character}, it follows that the Euler element is given by 
    \be
    \eul=m(\delta(\chi_1))=\chi_1^2+\chi_2^2+\chi_3^2=3\chi_1+\chi_2+\chi_3.
    \ee
    Writing $\eul$ in terms of the basis, we can see that the associated class function to the Euler element is not only constant on conjugacy classes, but non-zero on each conjugacy class. Therefore, the Euler element must be invertible and its inverse is the character that takes the inverse values on each conjugacy class.
        \begin{table}[h]
        \centering
        \begin{tabular}{|c|c|c|c|}
           \hline Conjugacy Class & Size $1$ & Size $2$ & Size $3$\\
           Representative & $\id$ & $(1\,\,2\,\,3)$ & $(1\,\,2)$\\\hline
            $\eul$ & $6$ & $3$ & $2$\\\hline
            $\eul\inv$ & $\frac{1}{6}$ & $\frac{1}{3}$ & $\frac{1}{2}$\\\hline
        \end{tabular}
        \label{tab:eulercharacter}
    \end{table}
    We can further compute using $\eta$ that \be
    \eul\inv=\frac{7}{18}\chi_1-\frac{1}{9}\chi_2-\frac{1}{18}\chi_3.
    \ee
\end{Ex}

\begin{Ex}
    Following the Example \ref{ex:groupalg} of the group algebra $K[G]$, we can see that $\eta^{gh}= 1$ if $g=h\inv$ and is zero otherwise. In this case, we see the Euler element can be computed 
    \begin{align*}
        \eul &= \sum_{g,h\in G}\eta^{gh}e_g e_h \\
        &= \sum_{g\in G}\eta^{gg\inv}e_ge_{g\inv}\\
        &= \sum_{g\in G}e_{1_G}\\
        &=|G| e_{1_G}.
    \end{align*}
\end{Ex}
\begin{Rmk}
    We also note that for any vector $v$, applying the coproduct followed by the product map is the same as simply multiplying by the Euler element. That is to say, 
    \be
    m\circ \delta(v)=\eul v.
    \ee
    This follows directly from \ref{eqn:deltabasis} and \ref{eqn:eulbasis}.
    \begin{align*}
        m\circ\delta(v) &= m\left(\sum_{i,j,a,b}\eta(v,e_i e_j)\eta^{ia}\eta^{jb}e_a\otimes e_b \right)\\
        &=\sum_{i,j,a,b}\eta(v,e_i e_j)\eta^{ia}\eta^{jb}e_a e_b\\
        &=\sum_{i,j,a,b}\eta(ve_i, e_j)\eta^{ia}\eta^{jb}e_a e_b\\
        &=\sum_{i,a}\eta^{ia} e_ie_av \\
        &=\eul v.
    \end{align*}
\end{Rmk}
\section{Frobenius algebras and TQFTs}\label{Frob}
\subsection{Topological Quantum Field Theory}\label{sec:TQFT}
In this section, we introduce the notion of topological quantum field theories (TQFT). While first introduced by Atiyah \cite{Atiyah} and refined by Segal \cite{Segal} in the late 1980s, Dijkgraaf \cite{Dijkgraaf} first discovered the connections between 2-dimensional TQFTs and Frobenius algebras. In the early 2000s, Moore and Segal \cite{MS} showed an equivalence of categories between the category of finite-dimensional Frobenius algebras and the category of 2-dimensional TQFTs. Throughout, we will refer to the fundamental literature of Kock \cite{Kock} and Teleman \cite{Teleman_2011} when discussing such connections.

\noindent We begin by considering the category \cob of 2-cobordisms. The objects of this category are disjoint copies of the circle $S^1$ while the morphisms are smooth oriented manifolds with boundary. In particular, if $M_0=\sqcup_n S^1$ and $M_1=\sqcup_m S^1$ are objects, then a morphism from $M_0$ to $M_1$ is a Riemann surface $\Sigma_{g,n,m}$ with boundary $\partial\Sigma_{g,n,m}=M_0^{op}\sqcup M_1$ where this means that the boundary components corresponding to $M_0$ have the opposite orientation of those corresponding to $M_1$.

%generators and relations for 2cob
\begin{Rmk}
    The morphisms of the category \cob are generated by the cobordisms corresponding to the unit, counit, identity map, product, and coproduct of the associated Frobenius algebra. The associativity and coassociativity relations of the Frobenius algebra can be regarded as the composition of pairs of pants with the same orientation, along with an identity morphism.
\end{Rmk}
\noindent The last relation here is what is known as the Frobenius relation, which deals with the composition of pairs of pants with opposite orientations. These are each diffeomorphic as Riemann surfaces of the same topological type with genus 0, two incoming boundary components, and two outgoing boundary components. This relation is equivalent to the commutativity of the diagram \ref{cd:FrobRelDiag}.

\begin{figure}[ht]
    \centering
    \includegraphics[width=0.8\linewidth]{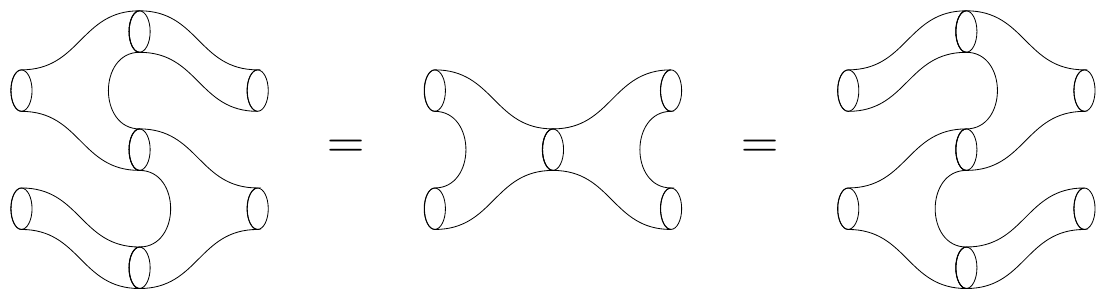}
    \caption{Frobenius relation of cobordisms.}
    \label{fig:FrobRel}
\end{figure}

\noindent Now, we fix a field $K$ and consider the monoidal category \textbf{KVect} of vector spaces over $K$ with a tensor product. A 2-dimensional topological quantum field theory (2D TQFT) is a symmetric monoidal functor $Z$ from the category \cob to the category \textbf{KVect} satisfying some particular axioms. For this given functor, we have that $Z(S^1)=A$ is a vector space over $K$ and $\omega_{g,n,m}:=Z(\Sigma_{g,n,m}):\Atn\longrightarrow\Atm$ is a multilinear map. The axioms of this functor and the functoriality respecting the relations on the generators of \cob makes $A$ necessarily a unital, commutative Frobenius Algebra.

\noindent We can further see how these relations on generators of \cob translate to properties of $A$. First, each of the generators corresponds to a particular piece of data defining A.
\begin{align*}
    \omega_{0,1,0}&:=Z(\Sigma_{0,1,0})=\eps:A\longrightarrow K,\\
    \omega_{0,0,1}&:=Z(\Sigma_{0,0,1})=\mathbf{1}:K\longrightarrow A,\\
    \omega_{0,2,0}&:=Z(\Sigma_{0,2,0})=\eta:A\tensor A\longrightarrow K,\\
    \omega_{0,1,2}&:=Z(\Sigma_{0,1,2})=\delta:A\longrightarrow A\tensor A,\\
    \omega_{0,2,1}&:=Z(\Sigma_{0,2,1})=m:A\tensor A \longrightarrow A.
\end{align*}

%define atiyah sewing/partial sewing
\noindent The \textbf{sewing axiom} of Atiyah and Segal \cite{Atiyah} states that the functor $Z$ respects the composition of morphisms in that the multilinear map corresponding to the composition of cobordisms is precisely the composition of the multilinear maps corresponding to the cobordisms in the composition. 

\noindent In particular, if $\Sigma_{g,n,k}$ is a genus $g$ cobordism with $n$ incoming boundary components and $k$ outcoming boundary components and $\Sigma_{h,k,m}$ is a genus $h$ cobordism with $k$ incoming boundary components and $m$ outgoing boundary components, then sewing them together at these $k$ pairs of boundary components creates a new cobordism $\Sigma_{g+h+k-1,n,m}$ with genus $g+h+k-1$, $n$ incoming boundary components and $m$ outgoing boundary components. The sewing axiom requires that the multilinear map associated to this new surface is exactly the composition of the maps corresponding to $\Sigma_{g,n,k}$ and $\Sigma_{h,k,m}$. That is,
\be\label{eqn:sewing}
\omega_{h,k,m}\circ\omega_{g,n,k}=\omega_{g+h+k-1,n,m}:\Atn\longrightarrow\Atm.
\ee
\begin{figure}[ht]
    \centering
    \includegraphics[width=0.3\linewidth]{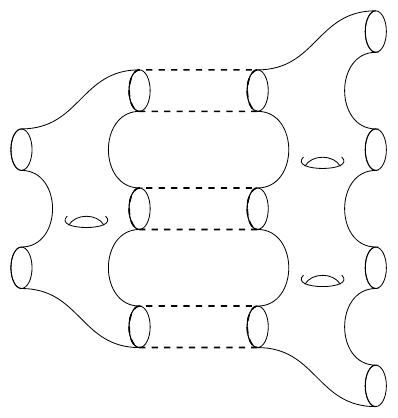}
    \caption{Sewing two cobordisms with agreeing numbers of boundary components.}
    \label{fig:sewing}
\end{figure}

\noindent This sewing axiom can be generalized into a \textbf{partial sewing} axiom in which we sew together only some subset of these boundary components. Suppose $\Sigma_{g,n,\ell}$ and $\Sigma_{h,k,m}$ are two cobordisms. Then, for any $j\geq 1$ with $j\leq \ell$ and $j\leq k$, we can sew together $j$ of the outgoing boundary components of $\Sigma_{g,n,\ell}$ with $j$ of the incoming boundary components of $\Sigma_{h,k,m}$, and sew identity cylinders to the other appropriate boundary components. This creates a new cobordism $\Sigma_{g+h+j-1,n+k-j, \ell+m-j}$, and the partial sewing axiom further requires that the TQFT respects this composition. That is, 
\be\label{eqn:partialsewing}
\omega_{h,k,m}\circ\omega_{g,n,\ell}=\omega_{g+h+j-1,n+k-j,m+\ell-j}:A^{\tensor n+k-j}\longrightarrow A^{m+\ell-j}.
\ee
%defn of tqft as a system of maps
This partial sewing axiom allows us to see relationships between the canonical maps of the Frobenius algebra structure. For example,
\be
\omega_{0,1,0}\circ \omega_{0,2,1}=\omega_{0,0,2}\Longrightarrow \eps\circ m=\eta.
\ee

\noindent Atiyah's work introduces one more axiom regarding duality and the reversal of orientations. In particular, if $Z(S^1)=A$, then $Z((S^1)^{op})=A^*$ and if $\Sigma$ is a cobordism of type $(g,n,m)$ and $\Sigma^*$ is the surface $\Sigma$ with the opposite orientation, then $Z(\Sigma^*)$ gives the dual map on dual vector spaces $(A^*)^{\tensor n}\longrightarrow (A^*)^{\tensor m}$. This gives $A$ a bialgebra structure. To guarantee duality between the algebra and coalgebra structures of $A$, we require that $\eta$ is nondegenerate. This means that the map $\lambda$ in \ref{eqn:lambdaiso} is an isomorphism, and makes $A$ a commutative Frobenius algebra.

\noindent We can see explicitly that the functoriality of $Z$ applied to the disk-sewing relation \be
(\omega_{0,1,0}\tensor \id)\circ \omega_{0,1,2}=\omega_{0,1,1}
\ee
gives the canonical basis expansion
\ref{eqn:canonbasis}. In particular, we can see \begin{align*}
    v&=(\id\tensor\eps)\circ\delta(v)\\
    &=\sum_{i,a}\eta(v,e_i)e_a.
\end{align*}

Because the relations on the generators of \cob are analogous to properties inherent to Frobenius algebras, we can conversely define a 2D TQFT starting with the generating maps $1,\eps,\id, m, $ and $\delta$, build all other maps from the partial sewing axiom, subject to a few restrictions.

\begin{Def}
    (2-Dimensional Topological Quantum Field Theory, \cite{Atiyah} ) Let $(A,\eps,\lambda)$ be the following set of data. $A$ is a finite-dimensional vector space over a field $K$. $\eps$ is a non-trivial linear map $A\longrightarrow K$, and $\lambda: A\stackrel{\sim}{\longrightarrow} A^*$ is an isomorphism from $A$ to its dual. A \textbf{2-dimensional topological quantum field theory} is a system of multilinear maps $\left(A,\left\{\omega_{g,n,m}\right\}\right)$ where $$\omega_{g,n,m}:\Atn\longrightarrow\Atm,\quad g,n,m\geq0$$satisfy certain axioms related to symmetry with respect to the symmetric group action on its domain, non-triviality of the counit, duality with respect to reversing the orientation of cobordisms, and the partial sewing axiom.
   % \begin{itemize}
   %     \item \textbf{TQFT 1. Symmetry:} The map \be
   %     \omega_{g,n,m}: \Atn\longrightarrow \Atm 
  %      \ee
  %      is invariant under the symmetric group action on its domain.
  %      \item \textbf{TQFT 2. Non-triviality: }\be
  %      \omega_{0,1,0}:=\eps:A\longrightarrow K.
  %      \ee

    %    \item \textbf{TQFT 3. Duality: }The following diagram commutes:
     %   \be
%\begin{tikzcd}
%A^{\otimes n} \arrow[dd, "\lambda^{\otimes n}"'] \arrow[rr, "{\omega_{g,n,m}}"] &  & A^{\otimes m} \arrow[dd, "\lambda^{\otimes m}"] \\
%                                                                                &  &                                                 \\
%(A^*)^{\otimes n} \arrow[rr, "{(\omega_{g,n,m})^*}"]                            &  & (A^*)^{\otimes m}                              
%\end{tikzcd}
%        \ee

%    \item \textbf{TQFT 4. Partial Sewing: }\be
%\omega_{h,k,m}\circ\omega_{g,n,\ell}=\omega_{g+h+j-1,n+k-j,m+\ell-j}:A^{\tensor n+k-j}\longrightarrow A^{m+\ell-j},\quad j\leq k,j\leq\ell.
 %   \ee
 %   \end{itemize}
\end{Def}

\noindent For any 2D TQFT satisfying these axioms, we can give a classification of these maps $\omega_{g,n,m}$. Paper \cite{DM1} gives a proof of this classification for the maps of the form $\omega_{g,n,0}$ and suggest how to use duality to reconstruct the $\omega_{g,n,m}$ maps in more generality. In this section for Frobenius algebras, we will be more explicitly and show this classification of the maps $\omega_{g,n,m}$ in the general case. This proof uses axioms governing the nature of Cohomological Field Theories. 
\subsection{Cohomological Field Theories.} Let $\overline{\mathcal{M}_{g,n}}$ denote the Deligne-Mumford compactification of the moduli space of stable curves of genus $g$ and $n$ marked points.
\noindent  A \textbf{Cohomological Field Theory} (CohFT) introduced by Kontsevich and Manin in \cite{KM} is a Frobenius algebra $A$ along with a system of linear maps $\Omega_{g,n}:A^{\tensor n}\longrightarrow H^*(\overline{\mathcal{M}_{g,n}},K)$ defined in the stable range for $2g-2+n>0$, satisfying a list of axioms so that $\Omega_{g.n}$ is compatible with the with the boundary maps of the moduli space $\overline{\mathcal{M}_{g,n}}$ gluing singular stable curves and the forgetful morphism.  Proposition 3.4 of \cite{DM1} implies that the $\left\{\omega_{g,n,0}\right\}$ part of any 2D TQFT gives a CohFT that takes only values in $H^0(\overline{\mathcal{M}_{g,n}},K)=K$ and conversely by Proposition 3.3 of \cite{DM1}, the restriction of any CohFT to the degree $0$ part of the cohomology ring gives the $\left\{\omega_{g,n,0}\right\}$ part of a 2D TQFT . In the context of the 2D TQFT maps $\left\{\omega_{g,n,0}\right\}$, the CohFT axioms become the following:
\be\label{eqn:cohft1}
\omega_{g,n+1,0}(v_1,\ldots,v_n,1)=\omega_{g,n,0}(v_1,\ldots,v_n).
\ee
\be\label{eqn:cohft2}
\omega_{g,n,0}(v_1,\ldots,v_n)=\sum_{a,b}\omega_{g-1,n+2,0}(v_1,\ldots,v_n,e_a,e_b)\eta^{ab}.
\ee
\be\label{eqn:cohft3}
\omega_{g_1+g_2,|I|+|J|,0}(v_I,v_J)=\sum_{a,b}\eta^{ab} \omega_{g_1,|I|+1,0}(v_I,e_a)\cdot \omega_{g_2,|J|+1,0}(v_J,e_b).
\ee
Where here $I\sqcup J=\{1,2,\ldots,n\}$. As it turns out, each of these three equations is a direct result of the partial sewing axiom \ref{eqn:partialsewing}. \noindent We now recall the following result on the classification of 2D TQFT , see \cite{DM2} Theorem 3.8 or \cite{DM1}, Theorem 3.9 of \cite{DM1} for $m=0$.
%give the folklore theorem about delta^m-1, proof from DM. First do g,n,0 then reconstruct?

\begin{Thm}[\cite{DM1,DM2}]\label{prop:gn0case}
    For maps whose outputs are in $K$, we have
    \be\label{eqn:gn0case}
    \omega_{g,n,0}(v_1,\ldots,v_n)=\eps(v_1\cdots v_n\eul^g).
    \ee
\end{Thm}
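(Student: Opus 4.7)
The plan is to prove \eqref{eqn:gn0case} by a double induction: outer induction on the genus $g$ using the genus-reduction axiom \eqref{eqn:cohft2}, and, at the base $g=0$, induction on $n$ using the splitting axiom \eqref{eqn:cohft3}. The anchor of the whole argument is the TQFT generating data: $\omega_{0,1,0}=\eps$ and $\omega_{0,2,0}=\eta$, the latter yielding $\omega_{0,2,0}(v_1,v_2)=\eps(v_1 v_2)$ directly from the relation $\eta(u,v)=\eps(uv)$.

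For the $g=0$ step I first establish $\omega_{0,3,0}(v_1,v_2,v_3)=\eps(v_1v_2v_3)$. This is the one place where I must reach past the three CohFT relations: it is obtained from the partial sewing axiom \eqref{eqn:partialsewing} by decomposing the three-holed sphere as two pairs of pants capped by a disk, giving $\omega_{0,3,0}=\omega_{0,1,0}\circ\omega_{0,2,1}\circ(\omega_{0,2,1}\tensor\id)=\eps\circ m\circ(m\tensor\id)$. With this in hand, for $n\geq 4$ I apply axiom \eqref{eqn:cohft3} with $g_1=g_2=0$, $I=\{1,2\}$ and $J=\{3,\ldots,n\}$ to obtain
\[
\omega_{0,n,0}(v_1,\ldots,v_n)=\sum_{a,b}\eta^{ab}\,\eps(v_1v_2 e_a)\,\omega_{0,n-1,0}(v_3,\ldots,v_n,e_b).
\]
The canonical basis expansion \eqref{eqn:canonbasis} says $\sum_{a,b}\eps(v_1v_2 e_a)\eta^{ab}e_b=v_1v_2$, so by multilinearity of $\omega_{0,n-1,0}$ the right-hand side collapses to $\omega_{0,n-1,0}(v_3,\ldots,v_n,v_1v_2)$, and the inductive hypothesis closes the step.

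For the outer induction, assume the formula holds in genus $g-1$ for every $n$. Applying \eqref{eqn:cohft2} followed by the inductive hypothesis yields
\[
\omega_{g,n,0}(v_1,\ldots,v_n)=\sum_{a,b}\eta^{ab}\,\eps\!\left(v_1\cdots v_n\,e_a e_b\,\eul^{g-1}\right),
\]
and pulling the finite sum inside $\eps$ while invoking the basis formula \eqref{eqn:eulbasis} for $\eul=\sum_{a,b}\eta^{ab}e_a e_b$ collapses this (using commutativity of $A$) to $\eps(v_1\cdots v_n\,\eul^g)$, as required.

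The main obstacle is conceptual rather than computational: the three CohFT relations \eqref{eqn:cohft1}--\eqref{eqn:cohft3} cannot by themselves pin down $\omega_{0,3,0}$, since every splitting of three inputs through \eqref{eqn:cohft3} only reproduces the $S_3$-symmetry of that map, and \eqref{eqn:cohft1} only fixes its value when one input is $1$. One is forced to leave the CohFT framework briefly and invoke the TQFT generating identifications $\omega_{0,2,1}=m$, $\omega_{0,2,0}=\eta$ together with partial sewing. Once this single input is supplied, the rest of the argument is purely bookkeeping built out of the canonical basis expansion \eqref{eqn:canonbasis} and the definition \eqref{eqn:eulbasis} of the Euler element.
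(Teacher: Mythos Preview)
Your argument is correct. The double induction is the natural route: the base $g=0$, $n\leq 3$ cases are precisely the TQFT generating data together with one application of partial sewing, the splitting axiom \eqref{eqn:cohft3} combined with the canonical expansion \eqref{eqn:canonbasis} handles the induction on $n$, and the genus-reduction axiom \eqref{eqn:cohft2} combined with \eqref{eqn:eulbasis} handles the induction on $g$. Your remark that \eqref{eqn:cohft1}--\eqref{eqn:cohft3} alone do not determine $\omega_{0,3,0}$ is also accurate and worth keeping.

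As for comparison: the paper does not actually prove this statement. It is quoted from \cite{DM1,DM2} and used as input for Theorem~\ref{prop:tqftclassification}. The closest the paper comes to an in-house argument is in Section~\ref{Section4}, where the analogous genus-$0$ formula (Lemma~\ref{lem:SOR0n0}) is obtained for counital nearly Frobenius algebras by sewing $\Sigma_{0,2,0}$ onto the \emph{output} side of cobordisms rather than splitting on the input side as you do. That output-side technique is what the paper needs in the non-unital setting; for the unital Frobenius case treated here, your input-side CohFT argument is entirely adequate and is essentially the proof given in the cited references.
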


\noindent We now can reconstruct the classification of 2D TQFT by appealing to duality. Note that a linear map $\omega_{g,n+m,0}:\Atn\tensor\Atm\longrightarrow K$ is equivalent to a map $\Atn\longrightarrow (A^*)^{\tensor m}$, so we can reconstruct the map $\omega_{g,n,m}$ from $\omega_{g,n+m,0}$ using the following diagram:
\be\label{cd:reconstruct}
\begin{tikzcd}
A^{\otimes n} \arrow[dd,"="', rotate = 90] \arrow[rr, "{\omega_{g,n+m,0}}"] &  & (A^*)^{\otimes m} \arrow[rr, "(\lambda\inv)^{\otimes m}"] &  & A^{\otimes m} \arrow[dd, "="] \\
                                                                &  &                                                           &  &                               \\
A^{\otimes n} \arrow[rrrr, "{\omega_{g,n,m}}"]                  &  &                                                           &  & A^{\otimes m}                
\end{tikzcd},
\ee
where $\lambda:A\stackrel{\sim}{\longrightarrow}A^*$ is the isomorphism given by $\lambda(v)=\eta(v,-).$ 
\subsection{A classification result}
Using the ideas developed in previous sections, we can prove the following classification result for $m>0$ noting that the $m=0$ case was discussed in \cite{DM2}.

    \begin{Thm}[Theorem \ref{thm B}]\label{prop:tqftclassification}
      If $A$ is a Frobenius algebra, then the value of a 2D TQFT is given by
        \be\label{eqn:tqftclassification}
        \omega_{g,n,m}(v_1,\ldots,v_n)=\begin{cases}
             v_1\cdots v_n\eul^g,& m=1\\                \eps(\omega_{g,n,1}(v_1,\ldots,v_n)),& m=0\\
            \delta^{m-1}(\omega_{g,n,1}(v_1,\ldots,v_n)),& m\geq 2
        \end{cases}.
        \ee
    \end{Thm}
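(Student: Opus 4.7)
The plan is to reduce each case to the already-established $m=0$ classification of Theorem \ref{prop:gn0case}. The crux is the $m=1$ case; once it is in hand, both $m=0$ and $m\geq 2$ follow from a single application of the partial sewing axiom \eqref{eqn:partialsewing} with the counit or coproduct respectively.

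For the $m=1$ step, I would probe the unknown vector $\omega_{g,n,1}(v_1,\ldots,v_n)\in A$ against an arbitrary $w\in A$ via the non-degenerate pairing $\eta$. Partial sewing of $\omega_{g,n,1}$ with $\omega_{0,2,0}=\eta$ (sewing the single output of the former to one input of the latter, so $j=1$) yields a cobordism of type $(g, n+1, 0)$. Combining this identity with Theorem \ref{prop:gn0case} gives
\begin{equation*}
\eta\bigl(\omega_{g,n,1}(v_1,\ldots,v_n),\,w\bigr)\;=\;\omega_{g,n+1,0}(v_1,\ldots,v_n,w)\;=\;\eps(v_1\cdots v_n\, w\,\eul^g).
\end{equation*}
Rewriting the right-hand side as $\eps\bigl((v_1\cdots v_n\eul^g)\,w\bigr)=\eta(v_1\cdots v_n\eul^g,\,w)$ and invoking the non-degeneracy of $\eta$ yields $\omega_{g,n,1}(v_1,\ldots,v_n)=v_1\cdots v_n\,\eul^g$.

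The remaining cases are formal consequences. For $m=0$, partial sewing of $\omega_{g,n,1}$ with $\omega_{0,1,0}=\eps$ gives $\omega_{g,n,0}=\eps\circ\omega_{g,n,1}$, matching \eqref{eqn:gn0case}. For $m\geq 2$, I would argue by induction on $m$: partial sewing of $\omega_{g,n,m-1}$ with $\omega_{0,1,2}=\delta$ on a single output produces a cobordism of type $(g,n,m)$, so $\omega_{g,n,m}=(\delta\otimes\id^{\otimes(m-2)})\circ\omega_{g,n,m-1}$. Iterating yields $\omega_{g,n,m}=\delta^{m-1}\circ\omega_{g,n,1}$, where coassociativity guarantees that the iterated coproduct $\delta^{m-1}:A\to A^{\otimes m}$ is unambiguous.

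The main obstacle is the $m=1$ case, and specifically the use of non-degeneracy to invert the pairing $\eta$. This step is the bridge from the ``closed'' classification of \cite{DM1,DM2} (output in the ground field $K$) to the ``open'' classification (output in $A^{\otimes m}$); all subsequent structure is then built by gluing on a counit or iterated coproducts. One should also verify in passing that applying $\eps$ in the $m=0$ formula recovers the output of Theorem \ref{prop:gn0case}, and that the choice of tensor slot to which $\delta$ is applied in the inductive step is immaterial by coassociativity.
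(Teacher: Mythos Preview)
Your proposal is correct. The $m=1$ step is essentially identical to the paper's: the paper phrases it as applying the inverse of the isomorphism $\lambda:A\to A^*$, $\lambda(v)=\eta(v,-)$, to $\omega_{g,n+1,0}(v_1,\ldots,v_n,-)$, while you phrase it as probing $\omega_{g,n,1}$ against an arbitrary $w$ via $\eta$ and invoking non-degeneracy; these are the same argument.

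Where you genuinely diverge is the $m\geq 2$ case. The paper works ``all at once'' through duality: it writes out $\delta^{m-1}(v)$ explicitly in a basis (equation \eqref{eqn:deltapowers}), applies $\lambda^{\otimes m}$ to this expression, and computes directly that the resulting element of $(A^*)^{\otimes m}$ evaluated at $(w_1,\ldots,w_m)$ collapses to $\eps(vw_1\cdots w_m)=\omega_{0,m+1,0}(v,w_1,\ldots,w_m)$; inverting $\lambda^{\otimes m}$ then identifies $\omega_{g,n,m}$ with $\delta^{m-1}(v_1\cdots v_n\eul^g)$. Your route instead inducts on $m$ by a single partial sewing with $\omega_{0,1,2}=\delta$ at each step, bypassing the basis computations entirely. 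Your argument is shorter and more conceptual, and it makes transparent that the $m\geq 2$ case is a purely formal consequence of the sewing axiom once $m=1$ is known. The paper's computation, on the other hand, makes the role of the reconstruction diagram \eqref{cd:reconstruct} and the duality $A\cong A^*$ explicit, which is thematically useful since the failure of this isomorphism is precisely what forces a different strategy in the nearly Frobenius setting of Section~\ref{Section4}.
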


\begin{proof}

   \noindent By Theorem \ref{prop:gn0case}, it suffices to  prove the theorem for $m>0$. We now observe the process of reconstructing maps with a positive number of outputs to give a classification of the entire system of maps $\left\{\omega_{g,n,m}\right\}$ in a 2D TQFT. We first begin by reconstructing $\omega_{g,n,1}$ from $\omega_{g,n+1,0}$. We see by the commutativity of \ref{cd:reconstruct} that $\omega_{g,n,1}=\lambda\inv\circ\omega_{g,n+1,0}.$ Specifically, we see that \begin{align*}
       \lambda\inv\circ\omega_{g,n+1,0}(v_1,\ldots,v_n,-)&=\lambda\inv(\eps(v_1\cdots v_n-)\\
       &=\lambda\inv(\eta(v_1\cdots v_n,-))\\
       &=v_1\cdots v_n.
   \end{align*}
   \noindent To see how to reconstruct the maps for $m>1$, we first must discuss the possible forms that successive comultiplication can take. Applying $\delta$ to the first tensor component of the formula \ref{eqn:deltabasis}, we see that \begin{align*}
    (\delta\tensor \id)(\delta(v))&=(\delta\tensor\id)\left(\sum_{a,b,i,j}\eta(v,e_ie_j)\eta^{ia}\eta^{jb}e_a\tensor e_b\right)   \\
    &= (\delta\tensor\id)\left(\sum_{a,b,i,j}\eta(ve_j,e_i)\eta^{ia}\eta^{jb}e_a\tensor e_b\right) \\
    &=(\delta\tensor\id)\left(\sum_{b,j}ve_j\tensor \eta^{jb}e_b\right)\\
    &=\sum_{b,j,c,d,k,\ell}\eta(ve_j,e_ce_d)\eta^{ck}\eta^{d\ell}e_k\tensor e_\ell\tensor \eta^{jb}e_b\\
    &=\sum_{b,j,c,d,k,\ell}\eta(ve_je_d,e_c)\eta^{ck}e_k\tensor \eta^{d\ell}e_\ell\tensor \eta^{jb}e_b\\
    &=\sum_{b,d,j,\ell}ve_je_d\tensor \eta^{d\ell}e_\ell\tensor \eta^{jb}e_b.
   \end{align*}
    If we continue this process of applying the coproduct to the first component and the identity to all other components, then up to some relabeling we see that \be\label{eqn:deltapowers}
    \delta^{m-1}(v)=\sum_{\substack{a_1,\ldots,a_m\\b_1,\ldots,b_m}}ve_{a_1}e_{a_2}\cdots e_{a_m}\tensor \eta^{a_1b_1}e_{b_1}\tensor \cdots\tensor \eta^{a_mb_m}e_{b_m}\in A^{\tensor m}.
    \ee
    In the case of \ref{eqn:deltapowers}, it is clear that by $\delta^{m-1}(v)$ we mean $(\delta\tensor\id\tensor\cdots\tensor\id)\circ\cdots\circ(\delta\tensor\id)\circ\delta(v),$ but we note that by coassociativity, this formula is far from unique. There are many equivalent ways in which we can successively apply the coproduct, and as such many equivalent forms that this sum of tensors may take. We now that this expansion of $\delta^{m-1}(v)$ in \ref{eqn:deltapowers} and apply $\lambda^{\tensor m}$. This should give us an element of $(A^*)^{m}.$ 

    \noindent Here, we see that

    \begin{align*}
        \lambda^{\tensor m}(\delta^{m-1}(v))(w_1,\ldots, w_m)&=\lambda^{\tensor m}\left(\sum_{\substack{a_1,\ldots,a_m\\b_1,\ldots,b_m}}ve_{a_1}e_{a_2}\cdots e_{a_m}\tensor \eta^{a_1b_1}e_{b_1}\tensor \cdots\tensor \eta^{a_mb_m}e_{b_m}\right)(w_1,\ldots,w_{m})\\
        &=\sum_{\substack{a_1,\ldots,a_m\\b_1,\ldots,b_m}}\eta(ve_{a_1}\cdots e_{a_m},w_1) \eta^{a_1b_1}\eta(e_{b_1},w_2) \cdots\eta^{a_mb_m}\eta(e_{b_m},w_{m})\\
        &=\sum_{\substack{a_1,\ldots,a_m\\b_1,\ldots,b_m}}\eta(ve_{a_1}\cdots e_{a_m}\eta^{a_mb_m}\eta(e_{b_m},w_{m}),w_1) \eta^{a_1b_1}\eta(e_{b_1},w_2) \cdots\eta^{a_{m-1}b_{m-1}}e_{b_{m-1}}\\
        &=\sum_{\substack{a_1,\ldots,a_{m-1}\\b_1,\ldots,b_{m-1}}}\eta(ve_{a_1}\cdots e_{a_{m-1}}w_m,w_1) \eta^{a_1b_1}\eta(e_{b_1},w_2) \cdots\eta^{a_{m-1}b_{m-1}}e_{b_{m-1}}\\
    \end{align*}
    We continue this process, applying the canonical basis expansion \ref{eqn:canonbasis} ,to reduce the number of basis vectors each time by 2.
    \begin{align*}
       \lambda^{\tensor m}(\delta^{m-1}(v))(w_1,\ldots, w_m) &=\sum_{a_1,b_1}\eta(ve_{a_1},w_1w_2\cdots w_m)\eta^{a_1b_1}e_{b_1}\\
        &=\eta(v,w_1w_2\cdots w_m)\\
        &=\eps(vw_1w_2\cdots w_m)\\
        &=\omega_{0,m+1,0}(v,w_1,\ldots,w_m).
    \end{align*}
    \noindent More generally, we see that \be\lambda^{\tensor m}(\delta^{m-1}(v_1v_2\cdots v_n\eul^g)(w_1,w_2,\ldots, w_m)=\eps(v_1v_2\cdots v_n\eul^g w_1w_2\cdots w_m),\ee
    \noindent and it follows that \be
    (\lambda\inv)^{\tensor m}\omega_{g,n+m,0}(v_1,\ldots,v_n,-,\ldots,-)=\omega_{g,n,m}(v_1,\ldots,v_n)=\delta^{m-1}(v_1\cdots v_n\eul^g).
    \ee

\end{proof}
%relate to normal form of surface?

\section{Nearly Frobenius algebras}\label{Section3}

\begin{Def} (Nearly Frobenius Algebra) A \textbf{Nearly Frobenius Algebra} is a set of data $(A,m,\delta)$ where $A$ is an algebra over a field $K$ with multiplication map $m:A\tensor A\to A$, and the comultiplication $\delta:A\to A\tensor A$ is a bimodule map satisfying the Frobenius relation that \ref{cd:FrobRelDiag} commutes. We note that unlike a Frobenius algebra, $A$ is neither required to be unital nor counital.
\end{Def}

\noindent Recall that for finite-dimensional Frobenius algebras, the Euler element $\eul$ is defined as $\eul=(m\circ\delta)(1)$, and similarly $(m\circ\delta)(v)=\eul v$. For non-unital Nearly Frobenius algebras, however, $\eul$ cannot exist as an element of the algebra since the algebra has no unit. Instead, we must define an \textbf{Euler map}, $\bige$ defined as \be \bige:=m\circ\delta:A\longrightarrow A.\ee
We can still write this map in terms of a basis for $A$, 
\be\label{eqn:EULbasis}
\bige(v)=\sum_{a,b}\eta^{ab}ve_ae_b.
\ee
 In the finite dimensional case, then the Euler map acts as multiplication by the Euler element $e$. We then use this Euler map to classify values of the Almost TQFT for higher genus.

\subsection{On Ribbon TQFTs}
In this work we will not present the details of ribbon graphs on Riemann surfaces of genus $g$ and $n$ marked points. We recall that for Frobenius algebra, or finitely generated algebras over $K$, one of the main results of \cite{DM1} is that the 2D TQFTs can be equivalently defined on the the set (category) or ribbon graphs with edge contraction axioms. Indeed, to any cell graph of genus $g$ with $n$ vertices we can associate a map, that we call ribbon TQFT, from $A^{\tensor n}$ to $K$, so that the contracting edge operations on cell graphs are compatible to the multiplication in the Frobenius algebra $m$ and the loop contraction is compatible with the comultiplication $\delta$, see \ref{eqn:deltabasis}, of the Frobenius algebra \cite{DM2}. The list of axioms of such map imply that The Ribbon TQFT is independent of the ribbon graph, it depends only on the topological information of the riemann surface, namely the genus $g$ and the number of marked points $n$. In particular, for a Frobenius algebra the classification of the Ribbon TQFT give the same result as Theorem \ref{eqn:gn0case}. As a corollary one obtains that the Atiyah - Segal definition of 2D TQFTs for Frobenius algebras is equivalent to ribbon TQFT of \cite{DM2}. 

Moreover, the number of cell graphs drawn on a riemann surface of genus $g$ with $n$ marked points is an infinite number. However imposing that every vertex has a fixed number of half-edges adjacent to it, makes this count finite. To avoid symmetry, in \cite{DMSS} the authors are counting the number of cell graphs of type $(g,n)$ with fixed degree at each verted $\mu_i$ and one outgoing arrow attached to every edge and denote this number by $C_{g,n}(\mu_1,\ldots, \mu_n)$, namely a generalized Catalan number. These numbers, similar to Hurwitz numbers, satisfy a recursion formula that can be proved via the edge contraction axioms. The generating function of Catalan numbers, encode the Witten-Kontsevich intersection numbers of $\overline{\mathcal{M}_{g,n}}$.

For almost TQFTS defined on nearly Frobenius algebras the authors discovered an analogous method to define a Ribbon TQFTs to any colored cell graph of topological type $(g,n)$ so that the colored edge contraction operations are compatible with the multiplication and the comultiplication of the Nearly Frobenius algebra. Namely, the infinite dimensionality forces us to consider cell graphs with vertices labeled by two colors say $red$ and $blue$. This is because $\lambda$ is no longer an isomorphism between $A$ and $A^*$, and the color red labels the input while the color blue labels the output.
We recall the main theorem the authors obtained in \cite{thesis} and \cite{DaDu}

\begin{Thm}\label{thm A}

Let $A$ be a Nearly Frobenius algebra with counit $\eps$, coproduct $\delta$, and Euler map $\bige$. Further let $\gamma$ be a colored cell graph of type $(g,n,m)$ with $n,m\geq 1,$ and $\Omega(\gamma)\in \Hom(A^{\tensor n}, A^{\tensor m})$ an assignment of a multilinear map satisfying  the Edge Contraction Axioms \cite{thesis}. Then the {\it ribbon TQFT} associated to $\gamma$ is given by  
    \be
    \Omega(\gamma)(v_1,\ldots,v_n)=\delta^{m-1}(\bige^g(v_1\cdots v_n)),
    \ee
    where $\delta^0(v)=\id(v)$.
    
If $A$ is finite dimensional and $m=0$ then
$\Omega(\gamma)(v_1,\ldots,v_n)=
\eps(e^g\cdot v_1\cdots v_n))$.

\end{Thm}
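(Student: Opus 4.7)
The plan is to prove the classification by induction on a complexity invariant of the colored cell graph $\gamma$, using the Edge Contraction Axioms to reduce $\gamma$ to a canonical form. The key observation is that the axioms precisely encode how the multilinear maps transform under local moves: contracting an edge between two red (input) vertices corresponds to applying the multiplication $m$, contracting a loop at a vertex contributes one factor of the Euler map $\bige = m\circ\delta$, and the blue (output) vertices act as slots for iterated coproducts $\delta$. Because the statement asserts that the answer depends only on $(g,n,m)$, it suffices to reduce to one canonical representative of each topological type and compute its value.

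First I would reduce the topological genus. Each handle of the ribbon graph can be detected as a pair of half-edges realizing a loop at some vertex after preliminary contractions; contracting such a loop is governed by the edge-contraction axiom that pairs a loop with the composition $m\circ\delta$, hence contributes one factor of $\bige$. Iterating this $g$ times yields a planar (genus zero) colored cell graph of type $(0,n,m)$ whose associated multilinear map has been premultiplied by $\bige^g$. Next I would merge all red vertices by successive red-red edge contractions: each such contraction combines the labels at adjacent red vertices by the multiplication axiom, so the $n$ input labels $v_1,\ldots,v_n$ collapse into a single product $v_1 v_2\cdots v_n$ sitting at a single red vertex connected to the $m$ blue vertices. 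The Frobenius relation \ref{cd:FrobRelDiag} is essential here to guarantee that this collapse is independent of the order of contractions.

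Finally I would handle the $m$ blue (output) vertices. The remaining configuration — one red vertex adjacent to $m$ blue vertices — corresponds to iterated application of the coproduct: each blue vertex represents an output tensor factor, and the red-blue edge contraction axiom forces the assembled map to be precisely $\delta^{m-1}$ applied to the element at the red vertex. Combining the three stages gives
\be
\Omega(\gamma)(v_1,\ldots,v_n)=\delta^{m-1}(\bige^g(v_1 \cdots v_n)),
\ee
with $\delta^0=\id$ covering the case $m=1$. For the finite dimensional case with $m=0$, the counit $\eps$ is available and one further applies it to close off the single output slot, recovering $\eps(\bige^g(v_1 \cdots v_n))=\eps(v_1 \cdots v_n\eul^g)$ in agreement with Theorem \ref{prop:gn0case}.

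The main obstacle is showing that the reductions above are well-defined: the value of $\Omega(\gamma)$ must be independent of the sequence of edge contractions chosen. This is exactly the coherence content of the Edge Contraction Axioms and reduces to the associativity of $m$, the coassociativity of $\delta$, and the Frobenius relation, all of which hold for a nearly Frobenius algebra $A$. A secondary technical point is bookkeeping the cyclic ordering of half-edges at each vertex imposed by the ribbon structure; in the finite dimensional Frobenius case this is handled by the canonical basis expansion \ref{eqn:canonbasis}, and in the nearly Frobenius setting it is replaced by the bimodule-map property of $\delta$, which ensures that the composite $\delta^{m-1}\circ\bige^g$ is insensitive to the particular decomposition of the graph into elementary contractions.
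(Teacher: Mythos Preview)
The paper does not actually prove this theorem: it is stated as a result ``the authors obtained in \cite{DaDu}'' and is only recalled here without proof. So there is no proof in the present paper against which to compare your attempt.

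That said, your outline is a reasonable strategy for such a classification and is in the spirit of the paper's surrounding discussion: reduce genus by contracting loops (each contributing a factor of $\bige$), merge input vertices by edge contractions (collapsing $v_1,\ldots,v_n$ to the product $v_1\cdots v_n$), and then read off the output structure as $\delta^{m-1}$. The coherence issue you flag---independence of the order of contractions---is indeed the crux, and you correctly identify associativity, coassociativity, and the Frobenius relation as the relevant ingredients.

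Where your sketch is thin is precisely at the points that require the details of \cite{DaDu}: the Edge Contraction Axioms are not spelled out in this paper, so your description of what a ``red--blue edge contraction axiom'' does, or how exactly a loop contraction produces $m\circ\delta$, is a plausible guess rather than something you can verify from the text at hand. In particular, the step ``each handle can be detected as a loop after preliminary contractions'' hides a nontrivial combinatorial argument about ribbon graphs that would need to be made precise. Your proposal is a credible plan, but to turn it into a proof you would need the actual axiom list and the graph-theoretic reduction lemmas from \cite{DaDu}.
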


\noindent We note here that the $m=0$ case is can be recovered from \cite{DM2}. 

\subsection{Frobenius structures}

It is well understood that any Frobenius algebra that is both unital and counital and whose bialgebra structure is both associative and coassociative is necessarily finite-dimensional. We would like to extend our theories and classifications to algebras which retain most of the same Frobenius structure, but may potentially be infinite dimensional. To do so, one must sacrifice either unitality or counitality. We note that Frobenius algebras can be defined based on the counit $\eps$ (see \cite{DaDu}, \cite{thesis}). However, Abrams \cite{Abrams} demonstrated that Frobenius algebras could be equivalently formulated based on the comultiplication $\delta$, in the sense that there is a one-to-one correspondense between Frobenius algebra structures defined by $\eps$ and those structures which are defined by $\delta$. Gonzalez et al. \cite{GLSU} then define this generalization of Frobenius algebras based on this comultiplicative structure.

To generalize 2D TQFT, Gonzalez et al. \cite{GLSU} introduce the idea of an \textit{Almost TQFT}, defined as a functor \be
 Z:\mathbf{2Cob^+}\longrightarrow\mathbf{\textbf{KVect}^\infty},
\ee
\noindent where $\mathbf{2Cob^+}$ is the full subcategory of \cob whose objects are a disjoint union of a positive number of copies of $S^1$ and $\mathbf{\textbf{KVect}^\infty}$ is the category of possibly infinite-dimensional vector spaces over $K$. While it is largely understood that there is an isomorphism of categories between the category of Frobenius algebras and 2D TQFT, they similarly show that there is an isomorphism of categories between the category of Nearly Frobenius algebras and the category of Almost TQFT.

%\begin{Ex}
%    {\color{blue} Kexuan's example about field extensions, but with $\mathbb{Q}\hookrightarrow\overline{\mathbb{Q}}$}
%\end{Ex}
%examples examples examples
\begin{Ex}
    We first note that any Frobenius algebra $A$ with counit $\eps$ is necessarily a Nearly Frobenius algebra. As in the previous section, the coproduct $\delta$ induced as the dual of the product map $m$ is precisely the same $\delta$ which is the coproduct of the Nearly Frobenius structure.
\end{Ex}
\noindent The converse of this example, however, is not necessarily true. That is, there are many Nearly Frobenius algebra structures which are not Frobenius algebras, even if these algebras are finite-dimensional vector spaces. Of the many examples of this, one follows from the idea that while Frobenius algebras have a counit which is essentially unique, a Nearly Frobenius algebra may have a whole space of comultiplicative structures that make is a Nearly Frobenius algebra.
\begin{Ex}

If $A$ and $B$ are both Nearly Frobenius algebras with coproducts $\delta_A$ and $\delta_b$, then $A\tensor B$ is also a Nearly Frobenius algebra with coproduct $\delta_A\tensor\tau\tensor\delta_B$ where $\tau$ is the transposition map swapping two entries. This is given by Theorem 3.4 of \cite{GLSU}.
\end{Ex}

\noindent As we noted before, Frobenius algebras must be finite-dimensional, but Nearly Frobenius algebras can be infinite dimensional.

\begin{Ex}
    Consider $A=K[[x,x\inv]]$, the algebra of formal Laurent series over $K$. Then, coproducts that look like linear combinations of 
    \be
    \delta_k(x^\ell)=\sum_{i+j=k+\ell}x^i\tensor x^j
    \ee
    \noindent all define Nearly Frobenius algebras which do not arise as counits of Frobenius algebras. For each of these coproducts, we can observe that their higher powers must look like
    \begin{align*}
        \delta_k^2(x^\ell)&=\sum_{i+j=k+\ell}\delta_k(x^i)\tensor x^j\\
        &=\sum_{i+j=k+\ell}\left(\sum_{a+b=k+i}x^a\tensor x^b\right)\tensor x^j\\
        &=\sum_{a+b+j-k=k+\ell}x^a\tensor x^b\tensor x^j\\
        &=\sum_{a+b+j=2k+\ell}x^a\tensor x^b\tensor x^j,
    \end{align*}or more generally, 
    \be
    \delta_k^{m}(x^\ell)=\sum_{a_1+a_2+\cdots a_{m-1}=mk+\ell}x^{a_1}\tensor \cdots \tensor x^{a_{m-1}}.
    \ee
\end{Ex}

%Remark about how this is the cohomology of something???

%possibly say that GLSU classified all semisimple FLA's?

\section{Atiyah Sewing for Counital Nearly Frobenius Algebras}\label{Section4}
 In this section, we would like to prove a similar classification result for Almost TQFT and potentially infinite-dimensional Nearly Frobenius algebras. These algebras are not unital (and often not counital), so the element $\delta(1)$ no longer exists in this structure. We can, however, provide a similar proof by sewing and removing the cobordism $\Sigma_{0,2,0}$ in the case that the corresponding Nearly Frobenius algebra still has a counit. Throughout the remainder of this section, we will suppose $A$ is a possibly infinite-dimensional Nearly Frobenius algebra with counit $\eps$.

\noindent We first introduce two useful results of the Atiyah-Segal partial sewing axiom involving sewing the tube-like shape $\Sigma_{0,2,0}$ associated to the map $\eta=\omega_{0,2,0}=\omega_{0,1,0}\circ\omega_{0,2,1}=\eps\circ m$. 
\begin{Lem}\label{lem:sewonright}
    The following recursive relationships hold for any counital Almost TQFT
    \be\label{eqn:SOR1}
    \omega_{g,n,m}(v_1,\ldots,v_n)=(\id^{\tensor n}\tensor \eta)\circ\omega_{g-1,n,m+2}(v_1,\ldots,v_n)
    \ee
    \be\label{eqn:SOR2}
    \omega_{g_1+g_2,n_1+n_2,m_1+m_2}(v_I,v_J)=(\id^{\tensor n_1-1}\tensor\eta\tensor\id^{\tensor n_2-1})\circ(\omega_{g_1,n_1,m_1+1}(v_I)\tensor\omega_{g_2,n_2,m_2+1}(v_J))
    \ee
    \noindent where $I\sqcup J=\{1,2,\ldots,n\}, |I|=n_1,$ and $|J|=n_2.$
\end{Lem}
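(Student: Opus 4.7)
The plan is to derive both identities as direct applications of the Atiyah-Segal partial sewing axiom stated in equation~\eqref{eqn:partialsewing}, together with the geometric interpretation of $\eta = \eps \circ m = \omega_{0,2,0}$. The existence of $\eta$ as a morphism in $\mathbf{2Cob^+}$ is guaranteed precisely because $A$ is assumed to be counital, so $\eps$ exists; the surface $\Sigma_{0,2,0}$ is topologically an annulus with two incoming boundary components, obtained by composing the pair-of-pants $\Sigma_{0,2,1}$ (corresponding to $m$) with the disk $\Sigma_{0,1,0}$ (corresponding to $\eps$).

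For the identity~\eqref{eqn:SOR1}, the plan is to begin with the surface $\Sigma_{g-1,n,m+2}$ and attach the annulus $\Sigma_{0,2,0}$ to two of its $m+2$ outgoing boundary components, leaving the remaining $m$ outgoing boundary components alone (which is realized algebraically by tensoring with $\id^{\tensor m}$). Topologically, connecting two outgoing boundaries of the same connected cobordism by a tube is a handle attachment: it increases the genus by one and reduces the outgoing boundary count by two. Hence the result is $\Sigma_{g,n,m}$, and functoriality of the Almost TQFT (via the partial sewing axiom) translates this surgery into the claimed factorization of $\omega_{g,n,m}$.

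For the identity~\eqref{eqn:SOR2}, the plan is to start with the disjoint union $\Sigma_{g_1,n_1,m_1+1} \sqcup \Sigma_{g_2,n_2,m_2+1}$, whose associated multilinear map is the tensor product $\omega_{g_1,n_1,m_1+1} \tensor \omega_{g_2,n_2,m_2+1}$. One then sews one outgoing boundary from each of the two surfaces to the two incoming boundaries of the annulus $\Sigma_{0,2,0}$. Since the two pieces live on different connected components prior to sewing, the resulting surgery connects them into a single connected surface without introducing any new handles; the Euler characteristic computation confirms that the total genus is $g_1 + g_2$, with $n_1 + n_2$ incoming and $m_1 + m_2$ outgoing boundary components, i.e.\ $\Sigma_{g_1+g_2,n_1+n_2,m_1+m_2}$. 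The identities tensored in on the untouched outgoing boundaries of each piece correspond to the $\id^{\tensor}$ factors in the formula.

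The main obstacle is bookkeeping the orderings of tensor factors and the combinatorics of boundary components, especially for~\eqref{eqn:SOR2}, where identity cylinders must be inserted on the outgoing boundaries that are not sewn and the inputs $v_I, v_J$ must be distributed correctly between the two factors of the disjoint union. These reorderings are handled implicitly by the symmetric monoidal (braiding) structure of $\mathbf{2Cob^+}$ and of $\mathbf{KVect^\infty}$, which is built into the axioms of an Almost TQFT. A secondary point worth checking is that every intermediate cobordism used in the argument has positive numbers of incoming and outgoing boundary components, so that all maps involved are indeed defined on $\mathbf{2Cob^+}$; this holds under the standing hypothesis that $n, m \geq 1$ (respectively $n_i, m_i \geq 1$).
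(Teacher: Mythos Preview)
Your approach is correct and matches the paper's: the paper presents this lemma without a written proof, offering only the remark that these are ``two useful results of the Atiyah--Segal partial sewing axiom involving sewing the tube-like shape $\Sigma_{0,2,0}$'' together with a figure illustrating the surgery. Your write-up simply makes explicit the topological check (handle attachment in~\eqref{eqn:SOR1}, connected sum of two components in~\eqref{eqn:SOR2}) that the paper leaves to the picture. One small remark: your caveat that all intermediate cobordisms must have a positive number of outgoing boundaries is slightly too strong, since $\Sigma_{0,2,0}$ itself has none; the point is rather that the \emph{counitality} hypothesis is exactly what lets $\eps$ (and hence $\eta=\eps\circ m$) exist, so that sewing on the output side is available even though sewing $\Sigma_{0,0,2}$ on the input side is not.
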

\begin{figure}[ht]
    \centering
    \includegraphics[width=0.3\linewidth]{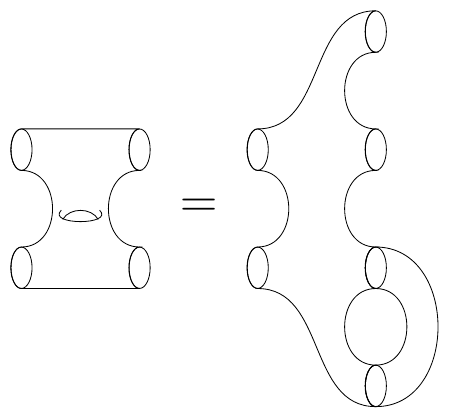}\hspace{3cm}\includegraphics[width=0.25\linewidth]{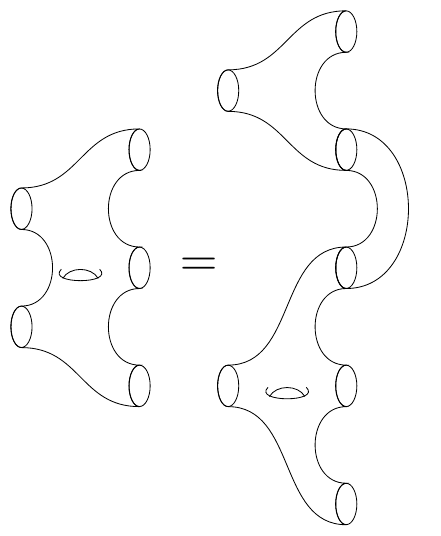}
    \caption{Partial sewing of $\Sigma_{0,2,0}$ onto the output side of a cobordism.}
    \label{fig:sewonright1}
\end{figure}
%\begin{figure}[ht]
 %   \centering
 %   \includegraphics[width=0.2\linewidth]{Figures/sewonright2.pdf}
 %   \caption{Partial sewing of $\Sigma_{0,2,0}$ connecting two disjoint cobordisms.}
 %   \label{fig:sewonright2}
%\end{figure}
\noindent We will use these two recursions to reconstruct the classification theorem for Almost TQFT. Note that since $A$ is not unital, then throughout this classification, we require that we only consider cobordisms where $n\geq 1$. As before, we want to consider cobordisms that fall in the stable range $2g-2+(n+m)>0$, so we take as base cases the cobordisms with genus 0 and 3 boundary components
\be
\omega_{0,1,2}(v)=\delta(v),\quad \omega_{0,2,1}(v,w)=vw,\quad \omega_{0,3,0}(u,v,w)=\eps(uvw),
\ee
\noindent and extend to the unstable range with $\omega_{0,1,0}(v)=\eps(v)$, $\omega_{0,1,1}(v)=v,$ and $\omega_{0,2,0}(v,w)=\eta(v,w)$.
\begin{Lem}\label{lem:SOR0n1}
For maps with genus 0 and one output, we have
\be\label{eqn:SOR0n1}
\omega_{0,n,1}(v_1,\ldots,v_n)=v_1\cdots v_n.
\ee
\end{Lem}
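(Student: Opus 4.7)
The plan is to induct on $n \geq 1$. The base cases are immediate: $n=1$ is the unstable-range definition $\omega_{0,1,1}(v)=v$, and $n=2$ is the stated base case $\omega_{0,2,1}(v,w)=vw$.

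For the inductive step, assume the claim for $n-1$. I would apply the partial sewing axiom \eqref{eqn:partialsewing} to glue the single outgoing boundary of $\Sigma_{0,n-1,1}$ to one of the two incoming boundaries of $\Sigma_{0,2,1}$. With genera $g=h=0$, the $\ell = 1$ output sewn to one of the $k=2$ inputs using $j=1$ pair of circles, the resulting cobordism has type
\[
\bigl(0+0+1-1,\; (n-1)+2-1,\; 1+1-1\bigr) = (0,n,1).
\]
Partial sewing therefore yields
\[
\omega_{0,n,1}(v_1,\ldots,v_n) \;=\; \omega_{0,2,1}\bigl(\omega_{0,n-1,1}(v_1,\ldots,v_{n-1}),\, v_n\bigr).
\]
By the inductive hypothesis and the definition of $\omega_{0,2,1}$, the right-hand side equals $(v_1\cdots v_{n-1})\cdot v_n = v_1\cdots v_n$, with the final equality following from associativity of $m$.

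The main obstacle is really just bookkeeping: confirming the $(g,n,m)$ triple produced by partial sewing and noting that the argument uses neither unitality nor counitality, only associativity, which is built into the definition of a nearly Frobenius algebra. Importantly, every cobordism that appears in the argument has at least one incoming boundary component, which is required since $A$ may fail to be unital and the domain category $\mathbf{2Cob^+}$ only contains morphisms with $n \geq 1$. One could alternatively derive the claim using Lemma \ref{lem:sewonright} by sewing $\Sigma_{0,2,0}$ onto the two outputs of $\delta$-like cobordisms, but sewing $\Sigma_{0,2,1}$ onto a single output is both more direct and produces the assertion without invoking the counit $\eps$.
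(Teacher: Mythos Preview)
Your proof is correct and in fact simpler than the paper's. You induct by composing $\omega_{0,n-1,1}$ with the product $\omega_{0,2,1}=m$ via the partial sewing axiom, so the inductive step reduces to associativity of $m$ alone. The paper instead invokes the recursion \eqref{eqn:SOR2} of Lemma~\ref{lem:sewonright}: it sews the tube $\Sigma_{0,2,0}$ (carrying $\eta=\eps\circ m$) between the single output of $\omega_{0,n-1,1}(v_1,\ldots,v_{n-1})$ and one of the two outputs of $\omega_{0,1,2}(v_n)=\delta(v_n)$, and then uses the canonical basis expansion \eqref{eqn:canonbasis} to collapse the result to $v_1\cdots v_n$. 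Your route avoids both $\eps$ and $\delta$, so it actually establishes the lemma for \emph{arbitrary} nearly Frobenius algebras, not only counital ones. The paper's route, while heavier here, rehearses exactly the $\eta$-gluing manoeuvre that becomes unavoidable in the later results (Lemmas~\ref{lem:SOR01m} and~\ref{lem:SOR0nm}, Proposition~\ref{prop:SOR1nm}, Theorem~\ref{thm:SORgnm}), where genus or extra outputs must be manufactured on the output side and no analogue of your shortcut is available.
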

\begin{proof}
We proceed by induction on $n$, starting with the base case that $\omega_{0,2,1}(v_1,v_2)=v_1v_2$. From here, assuming that $\omega_{0,n-1,1}(v_1,\ldots,v_{n-1})=v_1\cdots v_{n-1}$, applying \ref{eqn:SOR2} and the canonical basis expansion \ref{eqn:canonbasis}, we see that 
\begin{align*}
    \omega_{0,n,1}(v_1,\ldots,v_n)&=(\omega_{0,2,0}\tensor \id)\circ(\omega_{0,n-1,1}(v_1,\ldots,v_{n-1})\tensor\omega_{0,1,2}(v_n))\\
    &=(\eta\tensor\id)(v_1\cdots v_{n-1}\tensor \delta(v_n))\\
    &=(\eta\tensor\id)\left(\sum_{a,b}v_1\cdots v_{n-1}\tensor v_n e_a\tensor \eta^{ab}e_b\right)\\
    &=\sum_{a,b}\eta(v_1\cdots v_{n-1},v_n e_a)\tensor \eta^{ab}e_b\\
    &=\sum_{a,b}\eta(v_1\cdots v_{n-1}v_n, e_a)\eta^{ab}e_b\\
    &=v_1\cdots v_{n-1}v_n.
\end{align*}
\end{proof}

\noindent
%FIGURE GOES HERE
We similarly use \ref{eqn:SOR2} to establish the classification result for maps with no outputs; however, this time we use $\Sigma_{0,2,0}$ to connect two cobordisms which each have only one output.
\begin{Lem}\label{lem:SOR0n0}
    For maps with genus 0 and outputs in $K$, we have
    \be\label{eqn:SOR0n0}
    \omega_{0,n,0}(v_1,\ldots,v_n)=\eps(v_1\cdots v_n).
    \ee
\end{Lem}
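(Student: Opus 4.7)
The plan is to reduce $\omega_{0,n,0}$ to a previously classified map by using the partial sewing axiom to cap off the single output of an $\omega_{0,k,1}$ using the tube $\Sigma_{0,2,0}$. I proceed by induction on $n \geq 1$. The base case $n = 1$ is immediate: $\omega_{0,1,0}(v_1) = \eps(v_1)$ is the extension of the theory to the unstable range declared in the paragraph preceding the lemma, and this matches the claimed formula.

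For the inductive step ($n \geq 2$), I would apply equation \eqref{eqn:SOR2} of Lemma \ref{lem:sewonright} with $g_1 = g_2 = 0$, $m_1 = m_2 = 0$, and the partition $I = \{1,\ldots,n-1\}$, $J = \{n\}$. Geometrically, this realizes $\Sigma_{0,n,0}$ as the tube $\Sigma_{0,2,0}$ glued onto the single outputs of the cobordisms $\Sigma_{0,n-1,1}$ and $\Sigma_{0,1,1}$, giving
\be
\omega_{0,n,0}(v_1,\ldots,v_n) \;=\; \eta\bigl(\omega_{0,n-1,1}(v_1,\ldots,v_{n-1}),\, \omega_{0,1,1}(v_n)\bigr).
\ee
By Lemma \ref{lem:SOR0n1} the left factor equals $v_1 \cdots v_{n-1}$, and the right factor equals $v_n$ since $\omega_{0,1,1}$ is the identity on $A$ in the unstable range. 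Using the factorization $\eta = \eps \circ m$ stated in Section \ref{sec:TQFT}, the right hand side collapses to $\eps(v_1 \cdots v_{n-1} \cdot v_n) = \eps(v_1 \cdots v_n)$, as claimed.

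The only real subtlety — and the reason a base case is required at all — is that equation \eqref{eqn:SOR2} demands both parts of the split $I \sqcup J$ to be nonempty, because $A$ is not assumed unital and cobordisms with no inputs are outside the domain of the Almost TQFT. The partition $(|I|,|J|) = (n-1, 1)$ satisfies this precisely when $n \geq 2$, which is why $n = 1$ must be treated separately. Observe also that commutativity of $A$ is not needed: the chosen split preserves the order $1,2,\ldots,n$ of inputs, so Lemma \ref{lem:SOR0n1} produces the product in exactly the order appearing in the conclusion. Counitality of $A$, assumed throughout the section, is what makes $\eta = \eps \circ m$ a legitimate ingredient of the argument, so no further hypotheses are needed.
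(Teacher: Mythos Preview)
Your proof is correct and follows essentially the same strategy as the paper: split $\Sigma_{0,n,0}$ via equation \eqref{eqn:SOR2} into two pieces with one output each, evaluate those pieces with Lemma \ref{lem:SOR0n1}, and collapse with $\eta=\eps\circ m$. The only difference is the choice of partition: the paper takes $(|I|,|J|)=(n-2,2)$ whereas you take $(n-1,1)$, which has the minor advantage of absorbing the case $n=2$ into the main argument rather than leaving it to the unstable-range declaration. (As a stylistic aside, your ``induction'' never invokes the inductive hypothesis---the step for $n\geq 2$ is a direct computation---so it is really a case split $n=1$ versus $n\geq 2$.)
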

\begin{proof}
    This now follows as a result of \ref{eqn:SOR2} and Lemma \ref{lem:SOR0n1}.
    \begin{align*}
        \omega_{0,n,0}(v_1,\ldots,v_n)&=\eta\circ(\omega_{0,n-2,1}(v_1,\ldots,v_{n-2})\tensor\omega_{0,2,1}(v_{n-1},v_n))\\
        &=\eta(v_1\cdots v_{n-2},v_{n-1}v_n)\\
        &=\eps(v_1\cdots v_{n-2}v_{n-1}v_n).
    \end{align*}
\end{proof}
\noindent We now observe how to classify maps with more than one output, drawing inspiration from our reconstruction in \ref{cd:reconstruct}. One important thing to note is that if $A$ is no longer finite dimensional, then the map $\lambda:A\longrightarrow A^*$ given by $\lambda(v)=\eta(v,-)$ is no longer an isomorphism. It is, however, still injective onto its image, so instead of considering the inverse $\lambda\inv:A^*\longrightarrow A$, we would like to consider the map $r:\lambda(A)\longrightarrow A$ that represents the inverse of $\lambda$ wherever it is defined on the dual space $A^*$.

\noindent From the cobordism perspective, in the finite dimensional case, we could reconstruct $\Sigma_{g,n,m}$ by taking $\Sigma_{g,n+m,0}$ and sewing on $m$ copies of $\Sigma_{0,0,2}$ onto one input slot for each copy of $\sigma_{0,0,2}$, effectively turning these $m$ inputs into outputs. Now that we require our cobordisms to have at least one input slot, we can no longer sew copies of $\Sigma_{0,0,2}$. Instead, we realize $\Sigma_{0,n+m-1,1}$ as the result of sewing $m-1$ copies of $\Sigma_{0,2,0}$ onto $m-1$ of the output slots of $\Sigma_{0,n,m}$ and therefore $\Sigma_{0,n,m}$ can be realized by the removal of these copies of $\Sigma_{0,2,0}$ from $\Sigma_{0,n+m-1,1}$.
\begin{Lem}\label{lem:SOR01m}
    For maps with genus 0 and $m\geq 1$, the value of a counital Almost TQFT is given by
    \be\label{eqn:SOR01m}
    \omega_{0,1,m}(v)=\delta^{m-1}(v).
    \ee
\end{Lem}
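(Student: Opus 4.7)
The plan is to follow the $\eta$-sewing strategy used in Lemmas \ref{lem:SOR0n1} and \ref{lem:SOR0n0}, now reconstructing $\omega_{0,1,m}$ from the already-classified $\omega_{0,m,1}$. The key observation is that $\Sigma_{0,m,1}$ arises from $\Sigma_{0,1,m}$ by sewing $m-1$ caps $\Sigma_{0,2,0}$ onto $m-1$ of its outputs (turning those outputs into inputs), so the partial sewing axiom produces an equation pinning down $\omega_{0,1,m}(v)$ modulo the kernel of a contraction map, after which injectivity of $\lambda:A\to A^*$ closes the argument. The two base cases are essentially free: $\omega_{0,1,1}(v)=v=\delta^0(v)$ under the convention $\delta^0=\id$, and $\omega_{0,1,2}(v)=\delta(v)$ is one of the stated base cases.

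For $m\geq 2$ I would write $\omega_{0,1,m}(v)=\sum_\alpha u^1_\alpha\tensor\cdots\tensor u^m_\alpha\in A^{\tensor m}$ and, for arbitrary test vectors $w_1,\ldots,w_{m-1}\in A$, iterate the $\eta$-sewing formula \eqref{eqn:SOR2} to contract the first $m-1$ output slots against $w_1,\ldots,w_{m-1}$. Combining the resulting sewing identity with Lemma \ref{lem:SOR0n1} gives
\[
\sum_\alpha \eta(u^1_\alpha,w_1)\cdots\eta(u^{m-1}_\alpha,w_{m-1})\,u^m_\alpha \;=\; \omega_{0,m,1}(v,w_1,\ldots,w_{m-1}) \;=\; v\,w_1\cdots w_{m-1}.
\]
Next I would verify that $\delta^{m-1}(v)$ satisfies the same identity. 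Expanding via the iterated coproduct formula \eqref{eqn:deltapowers} and repeatedly applying the canonical basis expansion \eqref{eqn:canonbasis} in the form $\sum_{a_i,b_i}\eta^{a_ib_i}\eta(e_{b_i},w_i)e_{a_i}=w_i$, together with the associativity $\eta(u,vw)=\eta(uv,w)$ of the Frobenius pairing, each slot collapses individually and the sum telescopes to $v\,w_1\cdots w_{m-1}$.

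To upgrade this common-contraction equality to equality in $A^{\tensor m}$ itself, I would appeal to the injectivity of $\lambda$ noted in the paragraph immediately preceding the lemma: the induced map $(\lambda^{\tensor m-1}\tensor\id):A^{\tensor m}\to (A^*)^{\tensor m-1}\tensor A$ is injective over $K$, and the contractions above are precisely its evaluation on the two candidate elements $\omega_{0,1,m}(v)$ and $\delta^{m-1}(v)$, so they must coincide. The main obstacle is exactly this final step: in the infinite-dimensional setting we lose invertibility of $\lambda$, so we cannot directly mimic the clean $(\lambda\inv)^{\tensor m}$ reconstruction of diagram \eqref{cd:reconstruct} used in the proof of Theorem \ref{prop:tqftclassification}; instead we must replace ``pull back from the dual'' by ``test against enough elements and invoke injectivity of $\lambda$,'' which is what makes the Almost TQFT argument meaningfully different from the finite-dimensional one. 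The remaining work---choosing a consistent iterated coproduct via coassociativity and reindexing the basis sums appearing in \eqref{eqn:deltapowers}---is routine bookkeeping.
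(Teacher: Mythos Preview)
Your proposal is correct and follows essentially the same route as the paper: the paper also expresses $\omega_{0,m,1}(v,w_2,\ldots,w_m)=vw_2\cdots w_m$ via repeated canonical basis expansion as $(\lambda^{\tensor m-1}\tensor\id)(\delta^{m-1}(v))$ evaluated at $(w_2,\ldots,w_m)$, and then recovers $\omega_{0,1,m}$ from $\omega_{0,m,1}$ by applying the partial inverse $r$ of $\lambda$ on its image---which is precisely your injectivity argument phrased in terms of the map $r:\lambda(A)\to A$ introduced just before the lemma. Your presentation is if anything a bit more explicit about why the sewing axiom yields the contraction identity in the first place, but the underlying idea is the same.
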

\begin{proof}
    We first begin by showing that $\omega_{0,m,1}=(\lambda^{\tensor m-1}\tensor \id)\circ\delta^{m-1}$. To that end, using \ref{lem:SOR0n1} and \ref{eqn:canonbasis}, we see that 
    \begin{align*}
        \omega_{0,m,1}(v,w_2,\ldots,w_m)&=vw_2\cdots w_m\\
        &=\sum_{a,b}vw_2\cdots w_{m-1}e_a\eta(\eta^{ab}e_b,w_m)\\
        &=\quad\vdots\\
        &=\sum_{\substack{a_1,\ldots,a_{m-2}\\ b_1,\ldots b_{m-2}}}vw_2e_{a_1}e_{a_2}\cdots e_{a_{m-2}}\eta(\eta^{a_1b_1}e_{b_1},w_3)\cdots \eta(\eta^{a_{m-2}b_{m-2}}e_{b_{m-2}},w_m)\\
        &=\sum_{\substack{a_1,\ldots,a_{m-2}, a_{m-1}\\ b_1,\ldots b_{m-2}, b_{m-1}}}\eta(vw_2e_{a_1}\cdots e_{a_{m-2}},e_{a_{m-1}})\eta(\eta^{a_1b_1}e_{b_1},w_3)\cdots \eta(\eta^{a_{m-2}b_{m-2}}e_{b_{m-2}},w_m)\\&\hspace{2.5cm}\tensor \eta^{a_{m-1}b_{m-1}}e_{b_{m-1}}\\
        &=\sum_{\substack{a_1,\ldots,a_{m-2}, a_{m-1}\\ b_1,\ldots b_{m-2}, b_{m-1}}}\eta(ve_{a_1}\cdots e_{a_{m-1}},w_2)\eta(\eta^{a_1b_1}e_{b_1},w_3)\cdots \eta(\eta^{a_{m-2}b_{m-2}}e_{b_{m-2}},w_m)\\&\hspace{2.5cm}\tensor \eta^{a_{m-1}b_{m-1}}e_{b_{m-1}}\\
        &=(\lambda^{\tensor m-1}\tensor\id)\circ\left(\sum_{\substack{a_1,\ldots,a_{m-2}, a_{m-1}\\ b_1,\ldots b_{m-2}, b_{m-1}}}ve_{a_1}\cdots e_{a_{m-1}}\tensor \eta^{a_1b_1}e_{b_1}\tensor\right.\\&\left.\hspace{3cm}\cdots\tensor \eta^{a_{m-1}b_{m-1}}e_{b_{m-1}}\right)(w_2,\ldots w_m).
    \end{align*}
    \noindent Now recognizing that $\omega_{0,m,1}=(\lambda^{\tensor m-1}\tensor \id) \circ \delta^{m-1}$, then $\omega_{0,1,m}=(r^{\tensor m-1}\tensor\id)\circ (\omega_{0,m,1})$ gives that $\omega_{0,1,m}(v)=\delta^{m-1}(v).$
\end{proof}
We can similarly extend this to all maps of genus 0, now with any number of inputs.
\begin{Lem}\label{lem:SOR0nm}
    For maps with genus 0 and $n,m\geq 1$, the value of a counital Almost TQFT is given by 
    \be\label{eqn:SOR0nm}
    \omega_{0,n,m}(v_1,\ldots, v_n)=\delta^{m-1}(v_1\cdots v_n).
    \ee
\end{Lem}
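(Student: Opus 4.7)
The plan is to derive this directly from the two previous lemmas by a single application of the partial sewing axiom \eqref{eqn:partialsewing}. Concretely, I view the cobordism $\Sigma_{0,n,m}$ as the result of sewing one output of $\Sigma_{0,n,1}$ (classified by Lemma \ref{lem:SOR0n1}) to the single input of $\Sigma_{0,1,m}$ (classified by Lemma \ref{lem:SOR01m}). With $g_1=g_2=0$, $n_1=n$, $\ell_1=1$, $n_2=1$, $m_2=m$, and $j=1$ identified boundary circle, the partial sewing formula yields a cobordism of type $(0{+}0{+}1{-}1,\, n{+}1{-}1,\, m{+}1{-}1) = (0,n,m)$, exactly what we want.

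The calculation then reduces to functoriality applied to this composition. First I would write
\be
\omega_{0,n,m}(v_1,\ldots,v_n) \;=\; \omega_{0,1,m}\circ\omega_{0,n,1}(v_1,\ldots,v_n),
\ee
then substitute the value $\omega_{0,n,1}(v_1,\ldots,v_n)=v_1\cdots v_n$ from Lemma \ref{lem:SOR0n1}, and finally apply the formula $\omega_{0,1,m}(v)=\delta^{m-1}(v)$ from Lemma \ref{lem:SOR01m} to the element $v=v_1\cdots v_n\in A$. This chain of substitutions gives the claimed value $\delta^{m-1}(v_1\cdots v_n)$.

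There is no real obstacle; the only thing to check is that the partial sewing axiom is indeed available in the Almost TQFT setting with $n,m\geq 1$, so that the composed cobordism stays in $\mathbf{2Cob^+}$. This is fine here because both $\Sigma_{0,n,1}$ (with $n\geq 1$ inputs) and $\Sigma_{0,1,m}$ (with $m\geq 1$ outputs) lie in the stable range we are allowed to work in, and the resulting surface $\Sigma_{0,n,m}$ keeps at least one input boundary. Thus the proof is essentially a one-line composition argument, using Lemmas \ref{lem:SOR0n1} and \ref{lem:SOR01m} as black boxes together with partial sewing, and no further basis computations are required.
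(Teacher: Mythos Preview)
Your argument is correct: the full sewing axiom \eqref{eqn:sewing} applied to the factorization $\Sigma_{0,n,m}=\Sigma_{0,1,m}\circ\Sigma_{0,n,1}$ (with $g=h=0$, $k=1$) gives $\omega_{0,n,m}=\omega_{0,1,m}\circ\omega_{0,n,1}$, and substituting Lemmas~\ref{lem:SOR0n1} and~\ref{lem:SOR01m} immediately yields $\delta^{m-1}(v_1\cdots v_n)$. Your check that both intermediate cobordisms remain in the allowed range ($n\geq 1$ inputs throughout) is exactly the relevant point.

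This is a genuinely different route from the paper's own proof. The paper does \emph{not} compose Lemmas~\ref{lem:SOR0n1} and~\ref{lem:SOR01m}; instead it says to rerun the entire argument of Lemma~\ref{lem:SOR01m}---that is, redo the basis computation showing $\omega_{0,n+m-1,1}=(\lambda^{\tensor m-1}\tensor\id)\circ\delta^{m-1}$ and then apply the partial inverse $r$---with the single input $v$ replaced everywhere by the product $v_1\cdots v_n$. Your approach is shorter and more conceptual: it treats the two earlier lemmas as black boxes and lets functoriality do the work, avoiding any repetition of basis manipulations or any appeal to the map $r$. The paper's approach, by contrast, keeps the argument self-contained at the level of explicit $\eta$-expansions, which fits its overall style of deriving everything from the output-side sewing of $\Sigma_{0,2,0}$ rather than from a direct pants decomposition.
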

\begin{proof}
    The proof of this looks nearly identical to that of Lemma \ref{lem:SOR01m}, however everywhere we see a $v$, we replace it with the product $v_1\cdots v_n.$
\end{proof}
\noindent Now that we have all of the genus 0 maps, we can extend this to maps with positive genus using \ref{eqn:SOR1} to sew on $g$ copies of $\Sigma_{0,2,0}$ to a cobordism with a large enough number of output slots. We first examine this for genus 1 and then see how to extend to higher genus. 

\begin{Prop}\label{prop:SOR1nm}
    For maps with genus 1, the value of a counital Almost TQFT is given by 
    \be\label{eqn:SOR1nm}
     \omega_{1,n,m}(v_1,\ldots,v_n)=\begin{cases}
            \eps(\bige(v_1\cdots v_n)),& m=0,\\
            \bige(v_1\cdots v_n),& m=1,\\
            \delta^{m-1}(\bige(v_1\cdots v_n)),& m\geq 2.
        \end{cases}
    \ee
\end{Prop}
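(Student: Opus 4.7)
The idea is to decompose $\Sigma_{1,n,m}$ into three elementary cobordisms whose associated multilinear maps are either already classified in the preceding lemmas or can be read off directly from the sewing axiom, namely $\omega_{0,n,1}$, $\omega_{1,1,1}$, and $\omega_{0,1,m}$. The key preliminary observation, which feeds straight into the decomposition, is that
\[
\omega_{1,1,1} \;=\; \omega_{0,2,1}\circ\omega_{0,1,2} \;=\; m\circ\delta \;=\; \bige,
\]
obtained by sewing both outputs of the coproduct cobordism $\Sigma_{0,1,2}$ to both inputs of the product cobordism $\Sigma_{0,2,1}$; the sewing axiom accounts for genus $0+0+2-1=1$ and leaves one incoming and one outgoing boundary circle.

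For $m\geq 1$, I would then decompose $\Sigma_{1,n,m} = \Sigma_{0,1,m}\circ\Sigma_{1,1,1}\circ\Sigma_{0,n,1}$, where each of the two intermediate sewings identifies a single boundary circle so that the genus accounting of the partial sewing axiom \ref{eqn:partialsewing} gives $0+1+1-1=1$ at each stage. Substituting the known maps yields
\[
\omega_{1,n,m}(v_1,\ldots,v_n) \;=\; \omega_{0,1,m}\bigl(\omega_{1,1,1}(\omega_{0,n,1}(v_1,\ldots,v_n))\bigr) \;=\; \omega_{0,1,m}\bigl(\bige(v_1\cdots v_n)\bigr),
\]
where Lemma \ref{lem:SOR0n1} supplies $\omega_{0,n,1}(v_1,\ldots,v_n)=v_1\cdots v_n$. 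For $m=1$ the outermost factor is $\omega_{0,1,1}=\id$ (unstable-range convention), producing $\bige(v_1\cdots v_n)$; for $m\geq 2$, Lemma \ref{lem:SOR01m} gives $\omega_{0,1,m}=\delta^{m-1}$, producing $\delta^{m-1}(\bige(v_1\cdots v_n))$.

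For the remaining case $m=0$, cap the single output of $\omega_{1,n,1}$ by $\omega_{0,1,0}=\eps$ via the decomposition $\Sigma_{1,n,0}=\Sigma_{0,1,0}\circ\Sigma_{1,n,1}$, obtaining $\omega_{1,n,0}(v_1,\ldots,v_n) = \eps(\bige(v_1\cdots v_n))$. The only step requiring care is the genus/boundary bookkeeping at each application of the sewing axiom, which is routine. Because the argument proceeds entirely at the level of cobordism composition and never passes through a basis-dependent identity, the possible infinite-dimensionality of $A$ poses no obstacle.
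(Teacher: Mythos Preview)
Your argument is correct, but it follows a genuinely different route from the paper's proof. The paper instantiates the recursion \eqref{eqn:SOR1} from Lemma~\ref{lem:sewonright}, writing $\omega_{1,n,m}=(\id^{\tensor m}\tensor\eta)\circ\omega_{0,n,m+2}$ (i.e.\ sewing the cap $\Sigma_{0,2,0}$ onto two of the $m+2$ output slots of a genus-$0$ cobordism), then expands $\omega_{0,n,m+2}=\delta^{m+1}(v_1\cdots v_n)$ via the basis formula \eqref{eqn:deltapowers} and collapses the two capped tensor factors using \eqref{eqn:canonbasis} and \eqref{eqn:EULbasis}. Your decomposition instead isolates the genus in the single piece $\omega_{1,1,1}=\omega_{0,2,1}\circ\omega_{0,1,2}=m\circ\delta=\bige$ and then sandwiches it between the already-classified genus-$0$ maps $\omega_{0,n,1}$ and $\omega_{0,1,m}$ (or $\eps$ when $m=0$). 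The payoff of your approach is that the proof of the proposition itself is basis-free and essentially one line once the three factors are identified; it is also closer in spirit to the normal-form picture in Figure~\ref{fig:normalformcobordism}. The payoff of the paper's approach is that it exercises precisely the ``sew $\eta$ on the right'' tool introduced in Lemma~\ref{lem:sewonright}, and the same explicit computation is then reused verbatim as the inductive step in Theorem~\ref{thm:SORgnm}. One small caveat: your closing remark that the argument ``never passes through a basis-dependent identity'' is true of the present proposition, but the lemmas you invoke (Lemmas~\ref{lem:SOR0n1} and \ref{lem:SOR01m}) are themselves proved in the paper via basis expansions, so the global argument is not basis-free unless those are reproved as well.
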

\begin{proof}
    For the $m=0$ case, we use the partial sewing axiom to note that
    \begin{align*}
        \omega_{1,n,0}(v_1,\ldots,v_n)&=(\omega_{0,2,0}\circ\omega_{0,n,2})(v_1,\ldots,v_n)\\
        &=\omega_{0,2,0}(\delta(v_1\cdots v_n))\\
        &=\eta\circ\left(\sum_{a,b}v_1\cdots v_ne_a\tensor \eta^{ab}e_b\right)\\
        &=\sum_{a,b}\eta(v_1\cdots v_ne_a,\eta^{ab}e_b)\\
        &=\sum_{a,b}\eps(v_1\cdots v_n \eta^{ab}e_ae_b)\\
        &=\eps(\bige(v_1\cdots v_n)).
    \end{align*}
    \noindent Now considering $m=1$, we have
    \begin{align*}
        \omega_{1,n,1}(v_1,\ldots,v_n)&=(\omega_{0,2,0}\circ\omega_{0,n,3})(v_1,\ldots,v_n)\\
        &=\omega_{0,2,0}(\delta^2(v_1\cdots v_n))\\
        &=(\eta\tensor\id)\circ\left(\sum_{a_1,a_2,b_1,b_2}v_1\cdots v_ne_{a_1}e_{a_2}\tensor \eta^{a_1b_1}e_{b_1}\tensor \eta^{a_2b_2}e_{b_2}\right)\\
        &=\sum_{a_1,a_2,b_1,b_2}\eta(v_1\cdots v_ne_{a_1}e_{a_2},\eta^{a_1b_1}e_{b_1}) \eta^{a_2b_2}e_{b_2}\\
        &=\sum_{a_1,a_2,b_1,b_2}\eta(v_1\cdots v_ne_{a_1}\eta^{a_1b_1}e_{b_1},e_{a_2}) \eta^{a_2b_2}e_{b_2}\\
        &=\sum_{a_2,b_2}\eta(\bige(v_1\cdots v_n,e_{a_2})\eta^{a_2b_2}e_{b_2}\\
        &=\bige(v_1\cdots v_n).
    \end{align*}

    \noindent Similarly, for $m\geq 2$, we have 
    \begin{align*}
        \omega_{1,n,m}(v_1,\ldots,v_n)&=(\omega_{0,2,0}\circ\omega_{0,n,m+2})(v_1,\ldots,v_n)\\
        &=\omega_{0,2,0}(\delta^{m+1}(v_1\cdots v_n))\\
        &=(\eta\tensor\id)\circ\left(\sum_{\substack{a_1,\ldots,a_{m+1}\\b_1,\ldots,b_{m+1}}}v_1\cdots v_ne_{a_1}e_{a_2}\cdots e_{a_{m+1}}\tensor \eta^{a_1b_1}e_{b_1}\tensor \cdots\tensor\eta^{a_{m+1}b_{m+1}}e_{b_{m+1}}\right)\\
        &=\sum_{\substack{a_1,\ldots,a_{m+1}\\b_1,\ldots,b_{m+1}}}\eta(v_1\cdots v_ne_{a_1}e_{a_2}\cdots e_{a_{m+1}},\eta^{a_1b_1}e_{b_1}) \eta^{a_2b_2}e_{b_2}\tensor \eta^{a_3b_3}e_{b_3}\tensor \eta^{a_{m+1}b_{m+1}}e_{b_{m+1}}
            \end{align*}
   
We therefore obtain
    \begin{align*}
             \omega_{1,n,m}(v_1,\ldots,v_n)   &=\sum_{\substack{a_1,\ldots,a_{m+1}\\b_1,\ldots,b_{m+1}}}\eta(v_1\cdots v_ne_{a_1}\cdots e_{a_{m+1}}\eta^{a_1b_1}e_{b_1},e_{a_2}) \eta^{a_2b_2}e_{b_2}\tensor \eta^{a_3b_3}e_{b_3}\tensor \eta^{a_{m+1}b_{m+1}}e_{b_{m+1}}\\
        &=\sum_{\substack{a_2,\ldots,a_{m+1}\\b_2,\ldots,b_{m+1}}}\eta(\bige(v_1\cdots v_nea_3\cdots e_{a_{m+1}},e_{a_2})\eta^{a_2b_2}e_{b_2}\tensor \eta^{a_3b_3}e_{b_3}\tensor \eta^{a_{m+1}b_{m+1}}e_{b_{m+1}}\\
        &=\sum_{\substack{a_3,\ldots,a_{m-1}\\b_3,\ldots,b_{m+1}}}\bige(v_1\cdots v_n)e_{a_3}\cdots e_{a_{m+1}}\eta^{a_3b_3}e_{b_3}\tensor \eta^{a_{m-1}b_{m+1}}e_{b_{m+1}}\\
        &=\delta^{m-1}(\bige(v_1\cdots v_n)).
    \end{align*}
\end{proof}

\noindent Following this, we can then induct on the genus to find all positive genus values of Almost TQFT.

\begin{Thm}[Theorem \ref{thm C}]\label{thm:SORgnm}
    The value of a counital Almost TQFT is given by
 
        \be\label{eqn:SORgnm}
     \omega_{g,n,m}(v_1,\ldots,v_n)=\begin{cases}
            
            \bige^g(v_1\cdots v_n),& m=1\\
            \eps(\omega_{g,n,1}(v_1,\ldots,v_n)),& m=0\\
            \delta^{m-1}(\omega_{g,n,1}(v_1,\ldots,v_n)),& m\geq 2
        \end{cases}.
    \ee
\end{Thm}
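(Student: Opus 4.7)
The plan is to induct on the genus $g$, with the base case $g=0$ already covered by Lemmas \ref{lem:SOR0n1}, \ref{lem:SOR0n0}, and \ref{lem:SOR0nm} (handling $m=1$, $m=0$, and $m\geq 2$ respectively). For the inductive step, I would invoke the output-side sewing recursion \eqref{eqn:SOR1} of Lemma \ref{lem:sewonright}, which realises a genus-$g$ cobordism by contracting two output circles of a genus-$(g-1)$ cobordism with the bilinear form $\eta$; in symbols,
$$\omega_{g,n,m}(v_1,\ldots,v_n) \,=\, (\id^{\tensor m}\tensor\eta)\circ\omega_{g-1,n,m+2}(v_1,\ldots,v_n).$$
The induction runs over all $g\geq 1$ with $n\geq 1$ and $m\geq 0$ simultaneously, and the case $g=1$ done in Proposition \ref{prop:SOR1nm} can in fact be absorbed into the generic step.

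Fix $g\geq 1$ and $n\geq 1$, $m\geq 0$; since $m+2\geq 2$, the inductive hypothesis applied at genus $g-1$ with $m+2$ outputs gives
$$\omega_{g-1,n,m+2}(v_1,\ldots,v_n) \,=\, \delta^{m+1}\bigl(\bige^{g-1}(v_1\cdots v_n)\bigr).$$
Substituting into the recursion and expanding in the basis reproduces verbatim the computation already carried out in the proof of Proposition \ref{prop:SOR1nm}, the only change being that the element $v_1\cdots v_n$ appearing there is everywhere replaced by $\bige^{g-1}(v_1\cdots v_n)$. The crucial mechanism is that an $\eta$-contraction applied to two adjacent tensor factors of $\delta^{m+1}(u)$ absorbs a factor $\sum_{a,b}\eta^{ab}e_a e_b$ as one additional application of the Euler map $\bige$ and simultaneously drops the iterated coproduct from $\delta^{m+1}$ to $\delta^{m-1}$. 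Reading off the three subcases, the right-hand side becomes $\delta^{m-1}(\bige^g(v_1\cdots v_n))$ for $m\geq 2$, $\bige^g(v_1\cdots v_n)$ for $m=1$, and $\eps(\bige^g(v_1\cdots v_n))$ for $m=0$, precisely matching the claimed formula.

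The main obstacle is not conceptual but combinatorial: the statement branches into three subcases according to the value of $m$, and the $\eta$-contraction computation has to be checked separately in each branch, mirroring the threefold structure of the proof of Proposition \ref{prop:SOR1nm}. A cleaner alternative, which I would actually adopt in a full write-up, is to isolate once and for all the auxiliary identity
$$(\id^{\tensor k}\tensor\eta)\bigl(\delta^{k+1}(u)\bigr) \,=\, \begin{cases} \eps(\bige(u)), & k=0 \\ \bige(u), & k=1 \\ \delta^{k-1}(\bige(u)), & k\geq 2 \end{cases}$$
valid for any $u\in A$ and $k\geq 0$; this identity is already implicit in the proof of Proposition \ref{prop:SOR1nm}. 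Once extracted as a standalone lemma, the inductive step reduces to a one-line application with $u=\bige^{g-1}(v_1\cdots v_n)$, and the three $m$-subcases need only be verified once rather than at every stage of the induction.
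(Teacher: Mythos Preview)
Your proposal is correct and follows essentially the same approach as the paper: induct on $g$ using the output-side sewing recursion \eqref{eqn:SOR1} to reduce $\omega_{g,n,m}$ to $(\id^{\tensor m}\tensor\eta)\circ\omega_{g-1,n,m+2}$, and then replay the basis computation from Proposition \ref{prop:SOR1nm} with $v_1\cdots v_n$ replaced by $\bige^{g-1}(v_1\cdots v_n)$. Your observation that the $g=1$ case need not be taken as a separate base case, and your suggestion to isolate the $\eta$-contraction identity as a standalone lemma, are tidy improvements over the paper's terse write-up but do not change the underlying argument.
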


\begin{proof}[Proof of Theorem \ref{thm C}]

    Here we induct on $g$ and follow the same process of the proof of \ref{prop:SOR1nm}. Using the genus 0 values of \ref{lem:SOR0n1}, \ref{lem:SOR0n0}, and \ref{lem:SOR0nm} along with the genus 1 values of \ref{prop:SOR1nm} as base cases, we the assume that \ref{eqn:SORgnm} holds for genus $g-1$. We then follow the proof of \ref{prop:SOR1nm}, using the fact that \be
    \omega_{g,n,m}(v_1,\ldots,v_n)=(\omega_{0,2,0}\circ \omega_{g-1,n,m+2})(v_1,\ldots v_m),
    \ee
    \noindent using the basis expansions \ref{eqn:canonbasis} and \ref{eqn:EULbasis} to show that \ref{eqn:SORgnm} also holds for genus $g$.

\vskip 10pt

\noindent We can clearly see how this formula may arise from the topology of the associated cobordisms. Any cobordism with $n$ incoming boundary components, $m$ outgoing boundary components, and genus $g$ is topologically equivalent (and equivalent under the \cob relations) to a cobordism of the following form in three parts. The first part is a series of $n-1$ pairs of pants, sewn together in series with the appropriate number of cylinders. The second part is $2g$ pairs of pants sewn together in opposite directions, creating a hole with each pair. The final part is a series of $m-1$ pairs of pants in the reverse direction sewn together with the appropriate number of cylinders. Here we note that if $n=0$ or $m=0$, then instead of these pairs of pants, we simply replace this part of the cobordism with a cap in the appropriate direction.

\begin{figure}[ht]
    \centering
    \includegraphics[width=0.5\linewidth]{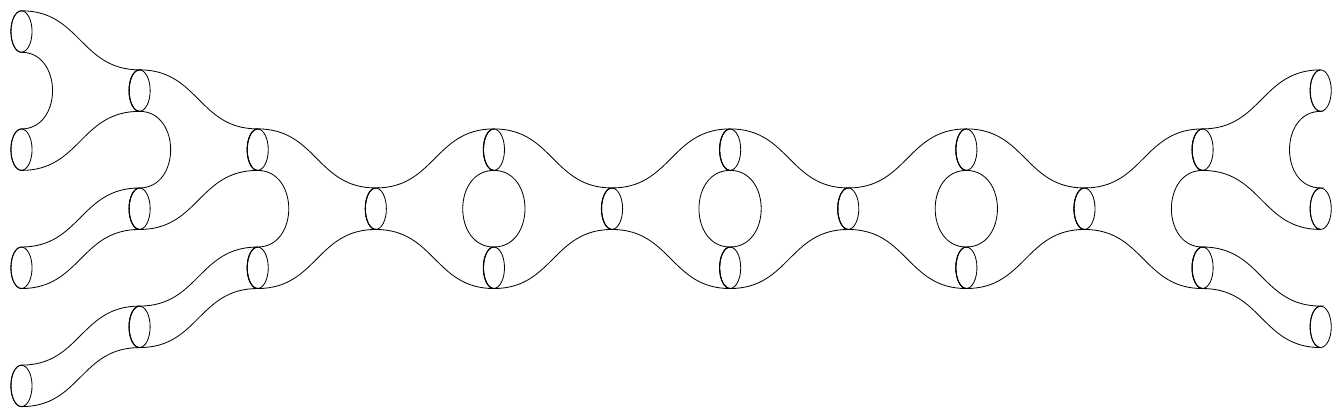}
    \caption{Normal form of a cobordism with $(g,n,m)=(3,4,3)$.}
    \label{fig:normalformcobordism}
\end{figure}
    
\end{proof}
\newpage
We can now complete the proof of the Corollary \ref{thm D}
\begin{proof}[Proof of Corollary \ref{thm D}] For $A$, a nearly Frobenius algebra, the classification of Almost TQFTs, Theorem \ref{thm C} and the classification of ribbon TQFTs, Theorem \ref{thm A} carried in \cite{DaDu} are the same. It implies that the list of axioms are equivalent.

\end{proof}

\nocite{*}
\setlength\bibitemsep{2\itemsep}
\printbibliography[title=REFERENCES]
\end{document}